\theoremstyle{plain}
\newtheorem{thm}{Theorem}[section]
\newtheorem{theorem}[thm]{Theorem}
\newtheorem{lemma}[thm]{Lemma}
\newtheorem{corollary}[thm]{Corollary}
\theoremstyle{definition}
\newtheorem{remark}[thm]{Remark}
\newtheorem{defin}[thm]{Definition}
\newtheorem{example}[thm]{Example}
\numberwithin{equation}{section}
\newcommand{\sF}{{\mathcal F}}
\newcommand{\sG}{{\mathcal G}}
\newcommand{\sL}{{\mathcal L}}
\newcommand{\sT}{{\mathcal T}}
\newcommand{\sU}{{\mathcal U}}
\newcommand{\BP}{{\mathbb P}}
\newcommand{\Z}{{\mathbb Z}}
\newcommand{\PP}{\ensuremath{\mathbb{P}}}
\newcommand{\CC}{\ensuremath{\mathbb{C}}}
\newcommand{\hol}{\ensuremath{\mathcal{O}}}
\newcommand\la{\lambda}
\newcommand\s{\sigma}
\newcommand\Ga{\Gamma}
\newcommand\De{\Delta}
\newcommand\ga{\gamma}
\newcommand\de{\delta}
\newcommand{\ra}{\ensuremath{\rightarrow}}
\def\eea{\end{eqnarray*}}
\def\bea{\begin{eqnarray*}}
\newcommand\dual{\mathrel{\raise3pt\hbox{$\underline{\mathrm{\thinspace d
\thinspace}}$}}}
\newcommand\qe{\ifhmode\unskip\nobreak\fi\quad $\Box$}       
\def\BOX{\hfill\lower.5\baselineskip\hbox{$\Box$}}
\newtheorem{theo}{Theorem}[section]
\newtheorem{remarkk}[theo]{Remark}
\newenvironment{rem}{\begin{remarkk}\rm}{\end{remarkk}}
\newtheorem{prop}[theo] {Proposition}
\newtheorem{problem}[theo]{Problem}
\newcommand{\Proof}{{\it Proof. }}
\title [ On the double point formula]{The double point formula with isolated singularities and canonical embeddings}
\author{Fabrizio Catanese, Keiji Oguiso}
\address{Lehrstuhl Mathematik VIII, Mathematisches Institut der Universit\"{a}t
Bayreuth, NW II, Universit\"{a}tsstr. 30,
95447 Bayreuth, and Korea Institute for Advanced Study, Hoegiro 87, Seoul, 
133-722, Korea}
\email{Fabrizio.Catanese@uni-bayreuth.de}
\address{Mathematical Sciences, the University of Tokyo, Meguro Komaba 3-8-1, Tokyo, Japan, and Korea Institute for Advanced Study, Hoegiro 87, Seoul, 
133-722, Korea}
\email{oguiso@ms.u-tokyo.ac.jp}
\thanks{The first author acknowledges support of the ERC 2013 Advanced Research Grant - 340258 - TADMICAMT. The second author acknowledges support of JSPS Grant-in-Aid (S) 15H05738, JSPS Grant-in-Aid (B) No 15H03611. }
\keywords{Improper double points, double point formula, canonical models of surfaces, rational double points, isolated singularities,
Chern classes, Euler number, symplectic smoothing, tangent star, Gauss map}
\subjclass[2010]{14J29, 14J17, 14N15, 14E 99, 14B05, 32C22, 32Q40, 57R12, 57R42}
\begin{document}


\maketitle

\begin{abstract}
Motivated by the embedding problem of canonical models in small codimension, we extend  Severi's  double point formula to the case of
 surfaces with rational double points, and we give more general double point formulae  for  varieties with isolated singularities.
 
 A concrete application is for surfaces with geometric genus $p_g=5$:  the canonical model is embedded in $\PP^4$
  if and only if we have a complete intersection of type $(2,4)$ or $(3,3)$.
\end{abstract}

\addtocontents{toc}{\protect\setcounter{tocdepth}{1}}

\tableofcontents
\newpage

\section{Introduction}

Already for irreducible plane curves $C \subset \PP^2_{\CC}$ of degree $d$ with $\de$ ordinary double points (i.e., with
local analytical equation $xy=0$) we have two types of double point formulae.

 First there is 
the Pl\"ucker formula for the {\bf class} $ \omega_1 (C)$, which is a projective invariant, the number of lines $L$ passing through
a general point $O \in \PP^2$ and tangent to the curve $C$ at some smooth point:

$$  (I) \ \ \   \omega_1 (C) = d (d-1) - 2 \de ,$$ 

second, there is the genus formula:

$$ (II) \ \ \   2 g(C) - 2=    d (d-3)  - 2 \de ,$$
where 2g(C) is the first Betti number of the normalization $\tilde{C}$ of $C$, which is a topological invariant. 

The two formulae can be rewritten as expressing the number of double points   in terms of the degree and 
the class, respectively  in terms of
the degree and the genus:

$$   2 \de =   d (d-1) -   \omega_1 (C) = 
  d (d-3) + 2 - 2g(C).$$ 
Both formulae can be generalized also for plane curves with more general singularities, but the number $2 \de$
is replaced differently in each of the two formulae.

In the genus formula, the number $2 \de$ is replaced by the sum
$$  \sum_P m_P (m_P -1),$$
where the sum runs over all points $P$, also the  infinitely near ones, and $m_P$ is the multiplicity of
(the strict transform of) the curve $C$ in $P$.

In the Pl\"ucker formula appear not only the multiplicity of a singular point,
but also the local classes of the branches of $C$ at the singular points
(cf. \cite[Pages 277--282]{GH78}), and, for instance, in the case of an ordinary  {\bf cusp} singularity, i.e., when the local analytic equation of
$C$ is
$$ y^2 - x^3 = 0,$$
the contribution for the Pl\"ucker formula is $3$, the sum of the multiplicity (= 2) and the local class (= 1).

Another `philosophical' aspect of the genus formula is that it explains that a smooth projective curve of genus $g$ and degree $d$
cannot be projected isomorphically to the plane $\PP^2$ if $$2 g-2 - d (d-3) \neq 0.$$

It was classically known that every smooth subvariety $X' \subset \PP^N$ of dimension $n$ can be projected
isomorphically to $X \subset \PP^{2n+1}$, since the secant variety of $X'$ has dimension $2n+1$ and does not fill up
the whole space if $N \geq 2n+2$, so that we can project $X$ from a general point $O$ isomorphically
in $\PP^{N-1}$.   But when you want to project to $\PP^{2n}$ there appear, for a general
projection,  $\de$ {\bf Improper Double Points = IDP 's }, that is isolated singularities consisting of two smooth branches 
intersecting transversally.

If we put ourselves in a situation similar to the one of plane curves with only IDP's  as singularities (in this 
case IDP's are the same as before, ordinary double points), again we have two double point formulae.

The following Severi's double point formula,  established by  Severi in 1902, is a generalization of the Pl\"ucker formula:

$$ (I) \ \ \   2 \de  = d (d-1) - \sum_{i=1}^n  \omega_i (X),$$
and is given in terms of projective invariants $ \omega_i (X),$ called the {\bf ceti} of $X$ (see section 2).  

Later on, it was realized that these projective invariants are related to absolute invariants (similar to the genus $g$ of a curve), and this led to the theory
of characteristic classes, notably Chern classes. In terms of these, we have the Todd-Fulton-Laksov formula,
which is the modern double point formula:

$$ (II) \  \ \   2 \de  =  d^2  -    c_n (N_{f}) =  d^2  -   [ c(T_{X'} )^{-1} (1 + H) ^{2n+1}]_n,$$ 

where $f : X'\ra X$ is the normalization map, $N_f$ is the normal bundle to the immersion $f$,
defined through the exact sequence 
$$ 0 \ra T_{X'} \ra f^* (T_{\PP^{2n}} ) \ra N_{f} \ra 0  ,$$
 where we  denote by $H$ the pull back of the hyperplane class to $X'$, and where
 $[ c_0 +  c_1 + \dots + c_n ] _n$ denotes the part $c_n$ of dimension $2n$ of the sum in the cohomology ring (or in the Chow ring) of $X'$.

This double point formula has, as we saw,  two formulations (the second follows from multiplicativity of the Chern classes for exact sequences); the first has a topological flavour, and allows for generalizations: it says that $d^2  $,  the self intersection number of $X$ in 
$\PP^{2n}$,
equals to $ 2 \de$ plus the self intersection number of the zero section in the normal bundle $N_f$ (which maps via the exponential map
onto a neighbourhood of $X$). 

The second instead   lends   itself to concrete calculations, by which it is possible to exclude that a smooth variety
$X'$ of dimension $n$ can be embedded in a projective space of dimension $\leq 2n$ by certain linear systems.

 For instance, the first author has proven elsewhere \cite{canhyp} the following result:

\begin{theo}\label{divisorsonAV}
Let $D$ be an ample smooth  divisor in an Abelian variety $A$ of dimension $n+1$. If $D$ yields a polarization of type
$(m_1, \dots, m_{n+1})$, then the canonical  map $\Phi_D$ of $D$ is a morphism, and it can be an embedding only
if $p_g(D) : = h^0(K_D)  \geq 2n+2$, which means that the Pfaffian $m : = m_1 \cdot m_2  \dots \cdot  m_{n+1}$ satisfies the
inequality $$ m \geq n + 2.$$
\end{theo}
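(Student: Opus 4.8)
The plan is to reduce the two stated conclusions to a single numerical assertion and then to read that assertion off from the double point formula (II). First I would record the adjunction computation: since $A$ is abelian, $K_A=\Oh_A$, so $K_D=(K_A+D)|_D=\Oh_D(D)$, the restriction of the ample bundle $\Oh_A(D)$. From the structure sequence $0\to\Oh_A\to\Oh_A(D)\to\Oh_D(D)\to0$, Kodaira (equivalently Mumford) vanishing gives $H^{>0}(\Oh_A(D))=0$, while $h^0(\Oh_A(D))=\chi(\Oh_A(D))=D^{n+1}/(n+1)!=m$ and $h^1(\Oh_A)=\dim A=n+1$. Hence $p_g(D)=h^0(K_D)=(m-1)+(n+1)=m+n$, so that $p_g\ge 2n+2$ is literally equivalent to $m\ge n+2$, and it suffices to prove the bound on $p_g$.

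Next I would establish that $\Phi_D$ is a morphism, independently of any embedding hypothesis. Dualizing the conormal sequence $0\to\Oh_D(-D)\to\Omega^1_A|_D\to\Omega^1_D\to0$ and using $\Omega^1_A|_D\cong\Oh_D^{\oplus(n+1)}$ (triviality of $\Omega^1_A$) yields $0\to T_D\to\Oh_D^{\oplus(n+1)}\to K_D\to0$. The surjectivity of the last arrow means that the $n+1$ sections it provides, namely the coordinates of the Gauss map, generate $K_D$ at every point; hence $|K_D|$ is base point free and $\Phi_D$ is a morphism.

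For the embedding statement I argue by contradiction, assuming $\Phi_D$ is an embedding with $p_g-1\le 2n$. Then $D\cong\Phi_D(D)$ sits as a smooth $n$-fold inside a linear $\PP^{p_g-1}\subseteq\PP^{2n}$, so the injective unramified map to $\PP^{2n}$ has no improper double points. I feed this into formula (II). The embedding is canonical, so the hyperplane class restricts to $\delta:=c_1(K_D)$, and from the sequence above $c(T_D)^{-1}=1+\delta$; thus $[\,c(T_D)^{-1}(1+H)^{2n+1}\,]_n=\binom{2n+2}{n}\int_D\delta^n=\binom{2n+2}{n}d$, where $d=\int_D\delta^n=D^{n+1}$ is the degree. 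Formula (II) then reads $0=2\de=d^2-\binom{2n+2}{n}d$, forcing $d=\binom{2n+2}{n}$. Since $d=D^{n+1}=(n+1)!\,m$, an embedding into $\PP^{\le 2n}$ would require $(n+1)!\,m=\binom{2n+2}{n}$, i.e. $m=\binom{2n+2}{n}/(n+1)!=C_{n+1}/n!$ with $C_{n+1}$ the Catalan number.

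The final step is to exclude this value. For $n\ge 2$ the rational number $C_{n+1}/n!$ is never a positive integer: it is a non-integral fraction for the first several $n$ and drops below $1$ once $n!$ outgrows the Catalan number, contradicting $m\in\Z_{\ge1}$; hence $p_g-1\ge 2n+1$ in these dimensions. The case $n=1$ is the genuine exception to the purely numerical argument, and is where I expect the real difficulty: the formula only forces $m=2$, i.e. a genus-$3$ curve $D$ of polarization type $(1,2)$ mapping by $|K_D|$ to $\PP^2$ as a would-be smooth plane quartic. Here I would invoke the classical fact that such a curve carries a base point free $g^1_2$, hence is hyperelliptic, so that $\Phi_D$ is the $2:1$ map onto a conic and not an embedding. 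The main obstacle is thus not the Chern class bookkeeping, which is routine, but rather (a) the clean verification that $C_{n+1}/n!\notin\Z_{\ge1}$ for all $n\ge2$, and (b) the low-dimensional classification input needed to rule out the single numerically admissible case $n=1$, $m=2$.
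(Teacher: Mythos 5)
First, a caveat: the paper does not actually prove Theorem \ref{divisorsonAV}. It quotes it from \cite{canhyp} (listed as ``in preparation'') and only indicates the method, namely to apply the double point formula with $\de=0$ to $D$ \emph{and to its iterated hyperplane sections}. Your reduction follows the same idea and your bookkeeping is correct: $p_g(D)=m+n$ (so the two stated inequalities are indeed equivalent), base point freeness of $|K_D|$ from the surjection $\Oh_D^{\oplus(n+1)}\to K_D$, and, assuming a canonical embedding into $\PP^N\subseteq\PP^{2n}$, the self-intersection formula forces $d=\binom{2n+2}{n}$ while $d=K_D^n=D^{n+1}=(n+1)!\,m$. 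The paper's sketch differs in that the hyperplane-section versions give the additional equations $d=\binom{2n+2-k}{n-k}$ for $0\le k\le 2n-N$; since these binomials are pairwise distinct, every case with $N\le 2n-1$ dies at once and only $N=2n$, i.e.\ $m=n+1$, survives to be tested against the single value $\binom{2n+2}{n}$. Your single-equation route instead needs the arithmetic fact, asserted but not verified in your writeup, that $a_n:=\binom{2n+2}{n}/(n+1)!$ is never a positive integer for $n\ge 2$; this is true and easy to complete ($a_2,\dots,a_5=5/2,\,7/3,\,7/4,\,11/10$, and $a_{n+1}/a_n=2(2n+3)/\bigl((n+1)(n+3)\bigr)<1$ for $n\ge 2$ with $a_6<1$), so for $n\ge 2$ your argument closes.

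The genuine gap is the case $n=1$, $m=2$, and it is not the routine classification input you suggest. Your claim that a smooth genus $3$ curve $D$ of type $(1,2)$ on an abelian surface ``carries a base point free $g^1_2$, hence is hyperelliptic'' is not a classical fact, and it appears to be false. What is true is that such a $D$ is \emph{bielliptic}: the surjection $J(D)\to A$ has one-dimensional connected kernel, and projecting $D\subset J(D)$ to the complementary elliptic curve gives a cover $D\to E$ whose degree is the exponent of the induced $(1,2)$ polarization, namely $2$; this is Barth's description of $(1,2)$-polarized abelian surfaces as Pryms of bielliptic genus $3$ curves. A dimension count then works against you: the pairs $(A,D)$ form a $3+1=4$-dimensional family mapping with countable fibres (abelian subvarieties of $J(D)$ form a countable set) into the $4$-dimensional irreducible bielliptic locus of $\mathcal M_3$, whose generic member is \emph{not} hyperelliptic; so the generic such $D$ has canonical map an embedding into $\PP^2$ with $p_g=3$. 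Hence this case cannot be disposed of as you propose: it requires either a genuinely different argument or must be treated as an exception to the statement (which, again, is only quoted and not proved in the present paper). At a minimum you should restrict the conclusion of your proof to $n\ge 2$ and flag $n=1$ as unresolved rather than ``classical''.
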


The main  idea for proving the above theorem is,   assuming that $D$ is embedded by $H^0(K_D) = H^0(\hol_D(D))$ and that
$N : = p_g(D) -1 \leq 2n$, to  apply the double point formula (in the special case $\de = 0$)  to $D$ and
to  its iterated hyperplane sections.  See also \cite{deV75}, \cite{Ka19} for other applications.

The  main motivation for our work stems from surface theory, namely the desire to extend the following theorem
of the first author \cite{Ca97}:
\begin{theo}\label{SantaCruz}
Let $S$ be a complex projective surface with  ample canonical divisor and with $ p_g(S) = 5$. Then the canonical map is an embedding
if and only  $S$ is a smooth complete intersection in $\PP^4$ of type $(2,4)$ or of type $(3,3)$.
\end{theo}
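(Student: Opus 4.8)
The plan is to prove the two implications separately; the content lies in the ``only if'' direction, which I would drive with Severi's double point formula (II). For the ``if'' direction, let $S\subset\PP^4$ be a smooth complete intersection of type $(2,4)$ or $(3,3)$. Adjunction gives $K_S=\hol_S(d_1+d_2-5)=\hol_S(1)$ in both cases, so the canonical system is the hyperplane system; since complete intersections are projectively normal we get $h^0(K_S)=h^0(\hol_{\PP^4}(1))=5=p_g$, the sheaf $\hol_S(1)$ is ample and very ample, and the canonical map is precisely the given embedding. This direction requires nothing further.

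For ``only if'', assume the canonical map embeds $S$ as a non-degenerate surface in $\PP^{p_g-1}=\PP^4$, so $\hol_S(1)=\hol_S(K_S)$ and $\deg S=K_S^2=:d$. I would apply formula (II) with $\delta=0$. Writing $H=K_S$ for the pulled-back hyperplane class, the sequence defining $N_f$ gives $c(N_f)=c(T_S)^{-1}(1+K_S)^5$, whence $c_2(N_f)=16K_S^2-c_2(S)$, while the self-intersection term is $d^2=(K_S^2)^2$. Substituting Noether's formula $c_2(S)=12\chi(\hol_S)-K_S^2$ and $\chi(\hol_S)=6-q$ collapses (II) to the single relation
$$(K_S^2)^2-17\,K_S^2+72=12\,q.$$
As $S$ is minimal of general type with $p_g=5$, Noether's inequality gives $K_S^2\ge 2p_g-4=6$, while $\chi(\hol_S)=6-q>0$ forces $q\le 5$; a finite check over $6\le K_S^2\le 16$ then leaves exactly $(K_S^2,q)=(8,0),\,(9,0),\,(12,1)$.

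I expect the main obstacle to be excluding the irregular case $(K_S^2,q)=(12,1)$. Here $\chi(\hol_S)=5$, so Riemann--Roch together with Kodaira--Ramanujam vanishing gives $h^0(2K_S)=\chi(\hol_S)+K_S^2=17>15=h^0(\hol_{\PP^4}(2))$; hence the multiplication map $\mathrm{Sym}^2H^0(K_S)\to H^0(2K_S)$ cannot be surjective and $S$ is not $2$-normal. To rule this out I would exploit the nonzero holomorphic $1$-form, equivalently the Albanese pencil over the elliptic curve $\Alb(S)$, to constrain the general hyperplane-section curve --- a half-canonical curve of degree $12$ and genus $13$ in $\PP^3$ --- and derive a contradiction with its position; this is the step I consider least routine.

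For the two regular cases I would conclude by multiplication-map and degree arguments. If $K_S^2=8$ then $h^0(2K_S)=14<15$ forces a quadric $Q\supset S$; non-degeneracy gives $\mathrm{rank}\,Q\ge 3$, so $Q$ is irreducible, and comparing $h^0(\sI_S(4))\ge 70-54=16$ with the $15$-dimensional space of quartics divisible by $Q$ produces a quartic $F$ with $Q\nmid F$; then $Q\cap F$ is a degree-$8$ complete intersection containing the irreducible degree-$8$ surface $S$, so $S=Q\cap F$ is of type $(2,4)$. If $K_S^2=9$ then $h^0(2K_S)=15$ and the key point is $h^0(\sI_S(2))=0$: I would establish this via projective normality of $S$ (so the restriction maps are onto and $h^0(\sI_S(2))=15-15=0$), using a parity argument --- a smooth surface on an irreducible quadric has even degree, against $\deg S=9$ --- to handle singular quadrics. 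With no quadric through $S$ and $h^0(\sI_S(3))\ge 35-33=2$, two independent cubics through $S$ share no component and cut out a degree-$9$ complete intersection equal to $S$, of type $(3,3)$. The technical heart here is the projective-normality input, which I would reduce to Castelnuovo-type normality of the hyperplane-section curve.
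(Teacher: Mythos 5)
Your strategy coincides with the one the paper itself follows: the paper only reproves the numerical core of this theorem (Theorem \ref{thm1} and Corollary \ref{cor1}), namely that the double point formula with $\delta=0$ gives $12\chi(\hol_S)=(17-d)d$, equivalently $(d-8)(d-9)=12q$, and then refers the geometric case analysis back to \cite{Ca97}. Your ``if'' direction, your derivation of $(K_S^2)^2-17K_S^2+72=12q$, the enumeration $(d,q)\in\{(8,0),(9,0),(12,1)\}$, and your treatment of $d=8$ (a quadric forced by $h^0(2K_S)=14<15$, irreducible and reduced because $S$ is nondegenerate, plus a quartic not divisible by it since $h^0(\sI_S(4))\ge 16>15$) are all correct and are essentially the classical argument.

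Two steps, however, have genuine gaps. First, the case $(d,q)=(12,1)$ is not excluded: the failure of $2$-normality you deduce from $h^0(2K_S)=17>15$ is not a contradiction (nothing in the hypotheses forces $S$ to be $2$-normal), and the Albanese argument is only announced. A concrete way to close it: $q=1$ gives a relatively minimal fibration $f:S\to\Alb(S)$ over an elliptic curve with fibres of genus $g\ge 2$ (elliptic fibres are impossible for a surface of general type, and $S$ has no $(-1)$-curves since $K_S$ is ample); Xiao's slope inequality $K_S^2\ge\frac{4(g-1)}{g}\deg f_*\omega_S=\frac{20(g-1)}{g}$ together with $K_S^2=12$ forces $g=2$, and then $K_S\cdot F=2$ on a fibre $F$, so $\Phi_{|K_S|}$ maps each fibre with degree $\ge 2$ onto a point or a line and cannot be an embedding.

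Second, in the case $d=9$ the key input $h^0(\sI_S(2))=0$ is not established. The parity claim you invoke is false: the smooth cubic scroll in $\PP^4$ has odd degree and lies on irreducible quadrics (any quadric through an irreducible nondegenerate surface has rank at least $3$). Nor is $2$-normality of the hyperplane section automatic: the general hyperplane section is a half-canonical curve $C\subset\PP^3$ with $\deg C=9$, $g(C)=10$, $h^0(\hol_C(1))=4$, and a curve of type $(3,6)$ on a smooth quadric has exactly these numerical invariants, so degree and genus alone do not exclude a quadric through $C$. One must use the half-canonical condition itself: if $C\subset\PP^1\times\PP^1$ has type $(3,6)$, then $K_C-2H_C=\hol(-1,2)|_C$ has degree $0$, while the sequence $0\to\hol(-4,-4)\to\hol(-1,2)\to\hol(-1,2)|_C\to 0$ gives $h^0(\hol(-1,2)|_C)=0$, so this degree-zero bundle is nontrivial, contradicting $K_C=2H_C$; the quadric cone requires a separate but similar treatment. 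Without some such argument the reduction of the $d=9$ case to a $(3,3)$ complete intersection is unproved.
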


If we take instead a minimal surface $S$ of general type with $ p_g(S) = 5$, the canonical map factors
through the canonical model $S'$ of $S$, a normal surface with canonical singularities (rational double points),
and we may ask the similar question: when do we get an embedding of the canonical model? The answer
is analogous:

\begin{corollary}\label{cor1} Let $S$ be a minimal smooth surface of general type with $p_g(S) = 5$ such that the canonical map  
$$\Phi_{|K_S|} : S \to S'\subset \BP^4$$ is a morphism with
 image isomorphic to  the canonical model $S'$ of $S$. Then, setting  $d : = (K_S^2)$,
$$12 \chi({\mathcal O}_S) = (17-d)d\,\, .$$
In particular,  $S'$ is then a complete interesection of type $(2, 4)$ or $(3, 3)$, if either the base field is of characteristic $0$ or the characteristic of the field $k$ is $\neq 2$ and $h^1({\mathcal O}_S) = 0$. 
\end{corollary}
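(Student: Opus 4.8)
The plan is to obtain the numerical identity $12\chi({\mathcal O}_S)=(17-d)d$ as a direct application of the double point formula for surfaces with rational double points proved above, and then to run a classical Hilbert-function argument for the degree-$d$ surface $S'\subset\PP^4$ with $\omega_{S'}\cong {\mathcal O}_{S'}(1)$ in order to pin down $d$ and the defining equations.

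First I would fix the geometry. The canonical model $S'$ has only rational double points, so the induced morphism $f\colon S\to S'$ is crepant: $K_S=f^{*}\omega_{S'}$, the hyperplane class $H$ of $S'\subset\PP^4$ pulls back to $K_S$, and $d=(K_S^2)=(\omega_{S'}^2)=\deg S'$. Since $\Phi_{|K_S|}$ is a morphism onto a surface isomorphic to $S'$, the embedding $S'\hookrightarrow \PP^4$ has no improper double points, i.e.\ $\delta=0$.

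Next I would feed $f\colon S\to S'\subset\PP^4=\PP^{2n}$, $n=2$, into the double point formula. The Chern contribution is
$$[c(T_S)^{-1}(1+H)^{5}]_2=10\,H^2-5\,c_1(T_S)\cdot H+c_1(T_S)^2-c_2(T_S).$$
Substituting $H=K_S$, $c_1(T_S)=-K_S$ and $c_2(T_S)=e(S)$ turns this into $16d-e(S)$, so the formula with $\delta=0$ gives $d^2=16d-e(S)$, that is $e(S)=(16-d)d$. The whole point of the extension to rational double points is that their local contributions are absorbed once one passes to the crepant resolution and uses that $\chi({\mathcal O})$ is a birational invariant; combining $e(S)=(16-d)d$ with Noether's formula $12\chi({\mathcal O}_S)=(K_S^2)+e(S)=d+(16-d)d$ yields $12\chi({\mathcal O}_S)=(17-d)d$, the first assertion.

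For the ``in particular'' I would exploit $p_g(S)=5$, so $\chi({\mathcal O}_S)=6-h^1({\mathcal O}_S)$ and in any case $\chi\ge 1$, which already bounds $1\le d\le 16$. Riemann--Roch together with vanishing (Kodaira--Mumford in characteristic $0$, and the assumed vanishing when $\mathrm{char}\,k\neq 2$) gives $h^0({\mathcal O}_{S'}(2))=h^0(2K_S)=\chi+d$; since $S'$ is $2$-normal the restriction $H^0({\mathcal O}_{\PP^4}(2))\to H^0({\mathcal O}_{S'}(2))$ is surjective, whence $\chi+d\le \binom{6}{2}=15$. Feeding $12\chi=(17-d)d$ into $\chi+d\le 15$ produces $d^2-29d+180\ge 0$, i.e.\ $d\le 9$ in the range $d\le 16$; Noether's inequality $d=(K_S^2)\ge 2p_g-4=6$ then leaves $d\in\{6,7,8,9\}$, and integrality of $\chi=(17-d)d/12$ forces $d\in\{8,9\}$, hence $\chi=6$ and $h^1({\mathcal O}_S)=0$. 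Finally, comparing $h^0({\mathcal O}_{S'}(m))=\chi+\tfrac12 m(m-1)d$ with $h^0({\mathcal O}_{\PP^4}(m))$ and using higher normality shows that for $d=9$ the surface lies on exactly two cubics, and for $d=8$ on a unique quadric together with one residual quartic; these equations cut out $S'$ as a complete intersection of type $(3,3)$, respectively $(2,4)$, compatibly with $\omega_{S'}\cong {\mathcal O}_{S'}(1)$ via adjunction ($a+b=6$).

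The numerical identity is essentially a plug-in to the formula established above, so I expect the real work to be in the last step: proving the normality statements that make the Hilbert-function count rigorous, and justifying the cohomology computation $h^0(2K_S)=\chi+d$ (and its higher-degree analogues). This is exactly where the hypotheses enter, since the vanishing theorems underlying $2$-normality and these computations are available in characteristic $0$ but must be replaced in positive characteristic by the conditions $\mathrm{char}\,k\neq2$ and $h^1({\mathcal O}_S)=0$.
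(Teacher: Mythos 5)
Your derivation of the identity $12\chi(\mathcal{O}_S)=(17-d)d$ is correct and is essentially the paper's argument: the expression $d^2-[c(T_S)^{-1}(1+H)^5]_2$ expands to $d^2-10d-5(H_S.K_S)-2(K_S^2)+12\chi(\mathcal{O}_S)$, which is exactly the right-hand side of Theorem \ref{thm1}, so setting $\delta=0$ and $H_S=K_S$ gives the formula just as in the paper (which simply plugs into Theorem \ref{thm1}; for the ``in particular'' it then cites the proofs in \cite{Ca16} and \cite{Ca97} rather than re-deriving them).

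In your reconstruction of that last part there is, however, a genuine gap: you assert that $S'$ is $2$-normal (and later ``higher normal'') and use the resulting inequality $h^0(2K_S)=\chi+d\le h^0(\mathcal{O}_{\mathbb{P}^4}(2))=15$ to force $d\le 9$. But $2$-normality is precisely the surjectivity of $\mathrm{Sym}^2H^0(K_S)\to H^0(2K_S)$, which is not automatic for a canonically embedded surface and is not established anywhere in your argument; without it the inequality $\chi+d\le 15$ has no justification (restriction only gives $h^0(\mathcal{I}_{S'}(2))\ge 15-(\chi+d)$, the opposite of what you need). Concretely, integrality of $\chi=(17-d)d/12$ together with $6\le d\le 16$ and $\chi=6-q\le 6$ leaves the case $d=12$, $\chi=5$, $q=1$, and there $h^0(2K_S)=17>15$, so $S'$ simply fails to be $2$-normal rather than producing a contradiction; excluding this case requires a different input (this is the content of the cited proofs). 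A second, smaller point: in positive characteristic the computation $h^0(2K_S)=\chi+d$ needs $h^1(2K_S)=0$, which does not follow from the hypothesis $h^1(\mathcal{O}_S)=0$ alone; the paper obtains it from Ekedahl's theorem \cite[Theorem 1.7]{Ek88} for $m\ge 2$, a step your sketch elides. The final step (that the quadric plus a quartic, resp.\ two cubics, cut out $S'$ scheme-theoretically) also deserves the degree-comparison argument to be spelled out, including why $S'$ with $d=9$ cannot lie on a quadric.
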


The idea is to extend the double point formula to the case of  surfaces with singularities either  IDP's or canonical singularities (and then setting $\de=0$
for the application): this is done
in an elementary way in  Theorem \ref{thm1},  Section 5  over any algebraically closed field of any characteristic. Then one establishes the above numerical formula, by which follows  that $d=8,9$;  then the proof proceeds exactly as in  Theorem \ref{SantaCruz}.

Since it would be interesting to extend this type of result also in higher dimension, it is desirable to extend the
double point formula to varieties  $X$ with singularities.  In this paper we restrict 
to varieties with isolated singularities.

Fulton and Laksov's generalization \cite{FL77} allows in particular  the variety $X$, outside the IDP' s,
to have local complete intersection singularities, a property which implies that $X$ is locally smoothable at these points.

In this flavour we prove in the last section another generalization, of which the following theorem is a special case:

\begin{theo}\label{sympl}
Let $X \subset \PP^{2n}$ be a variety with isolated singularities, of which $\de$ are Improper Double Points,
and the other are normal singularities admitting a smoothing.  Then $X$ admits a global smoothing to a
symplectic immersed manifold $M \subset \PP^{2n}$, with exactly  $\de$  Improper Double Points,
and we have, if $f : M' \ra M$ is the immersion,
$$ d^2 = 2 \de + e(N_f) ,$$
where $e$ is the Euler class of the oriented normal bundle to the map.
\end{theo}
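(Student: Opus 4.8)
The plan is to decouple the geometric construction of $M$ from the purely topological intersection count, treating the smoothing of the non-IDP points and the preservation of the IDP's separately. First I would work locally at each of the finitely many normal smoothable singularities $p_j$. Because $p_j$ is isolated, normal and admits a smoothing, the Milnor fibration of a local smoothing produces a smooth complex manifold $F_j$ (the nearby fibre) whose boundary is the link $L_j = X \cap \partial B_\epsilon(p_j)$, and $F_j$ carries a K\"ahler form realizing it as a symplectic filling of the natural contact structure $\xi_j$ on $L_j$. On the other side, a punctured Fubini--Study neighbourhood of $p_j$ in the smooth locus $X^{\mathrm{sm}}$ is symplectomorphic, after rescaling, to a collar piece of the symplectization of the same contact manifold $(L_j,\xi_j)$. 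The idea is then to excise from $X$ the cone on $L_j$ and to glue $F_j$ in across this collar, using Moser's theorem to interpolate the two symplectic forms; the $\de$ IDP's, being ordinary crossings of two smooth transverse complex sheets, are left untouched. The output is a closed real $2n$--manifold $M'$ together with an immersion $f\colon M' \ra \PP^{2n}$ whose image $M$ is embedded except at the $\de$ unchanged IDP's, where it has transverse double points, and with $f^*\omega_{FS}$ nondegenerate.

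Two preliminary facts then need to be recorded. Since $M$ agrees with $X$ outside the disjoint balls $B_\epsilon(p_j)$, and inside each (contractible) ball the glued Milnor fibre is homologous rel boundary to the excised cone, we get $[M] = [X] = d\,[\PP^n]$ in $H_{2n}(\PP^{2n};\ZZ)$, whence $[M]\cdot[M] = d^2$. Next, choosing an $\omega_{FS}$--compatible almost complex structure $J$ on $\PP^{2n}$ for which $TM'$ is $J$--invariant (possible because $M$ is a symplectic immersed submanifold), I orient $M'$ and endow $N_f = f^*T\PP^{2n}/TM'$ with the structure of a complex, hence oriented, rank--$2n$ bundle, so that $e(N_f) = \langle e(N_f),[M']\rangle$ is defined; near the untouched IDP's one may take $J$ to be the standard integrable structure, so that those crossings are genuinely complex.

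For the formula I would compute $f_*[M']\cdot f_*[M']$ by a generic perturbation $g$ of $f$ transverse to $f$, counting with signs the solutions of $f(x) = g(y)$ in $M'\times M'$. The solutions with $x$ close to $y$ constitute the diagonal contribution and count the zeros of a generic section of $N_f$, namely the normal Euler number $e(N_f)$. The solutions lying near an IDP come in two, obtained by exchanging the roles of the two sheets; each is a positive transverse intersection of two complex planes, so each of the $\de$ IDP's contributes $+2$. Adding the two contributions yields $d^2 = 2\de + e(N_f)$.

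The main obstacle is the symplectic gluing. The substance is to verify that a Fubini--Study collar of the link inside $X^{\mathrm{sm}}$ and a collar of $\partial F_j$ are symplectomorphic after rescaling, so that Moser interpolation produces a smooth symplectic form across the seam without creating new singular or non-immersed points and without destroying nondegeneracy; this is exactly where normality, isolatedness and the smoothability hypothesis enter, through the contact geometry of the link and its two symplectic fillings. Granting a bona fide symplectic immersion with positive transverse IDP's, the intersection-theoretic conclusion is then standard.
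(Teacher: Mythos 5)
Your proposal is correct and follows essentially the same route as the paper: produce a symplectic immersed smoothing $M$ by replacing neighbourhoods of the normal singularities with Milnor fibres of local smoothings while leaving the IDP's untouched, and then compute the self-intersection $d^2=[M]\cdot [M]$ as the normal Euler number plus a contribution of $+2$ from each positively oriented transverse double point. The one step you flag as the main obstacle --- the contact-collar/Moser gluing of the Milnor fibre into $X$ inside $\PP^{2n}$ --- is precisely what the paper outsources to Theorem 1.2 of \cite{Ca09} (together with a local deformation at the non-normal points in the more general Theorem \ref{symplgen}), so your sketch is, if anything, more explicit than the published argument.
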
 

In spite of their elegance, the two above generalizations have the drawback that the top Chern class of the normal bundle
of the partial resolution of $X$ at the IDP's (respectively the Euler class in  Theorem \ref{sympl}) is not directly computable (at least this is our impression). This is due to the fact that Chern classes are multiplicative only for sheaves on smooth varieties, or, for   general varieties, only  for exact sequences of vector bundles, 
while Euler classes are not multiplicative.

It seems therefore worthwhile to go back to Severi's approach. 

Severi established an inductive formula,

$$ 2  \hat{\de} = 2 \de + \omega_n (X),$$
where $\hat{X}$ is obtained from $X$ intersecting with a general hyperplane $H$, and then projecting $X \cap H$
from a general point
$O \in H$ to $\PP^{2n-2}$, to obtain  $\hat{X}$ with $ \hat{\de} $ IDP's.

First of all we explain in  Section 2 how the two different double point formulae are related to each other, showing how to formulate Severi's inductive formula in terms of the Gauss map
of the immersion $ f : X' \ra \PP^{2n}$.

Indeed, if $Q_{X'}$ is   the pull back of the universal quotient bundle of the Grassmannian $ Gr(n,2n)$ (of projective subspaces of 
dimension $n$ in $\PP^{2n}$), then we have:
$$ \omega_n(X) = c_n (Q_{X'}) ,$$
and the relation between the two approaches is given by the simple formula
$$ Q_{X'} = N_f (-1).$$

For worse singularities, we take a resolution $Y$ of $X$ on which the Gauss map becomes a morphism.
Then $f : Y \ra \PP^{2n}$ is no longer an immersion and  the two sheaves are different, yet there is 
a surjection $N_f (-1) \ra Q_Y$, and the double point formula can be expressed in terms of the kernel sheaf
$\sF$, called {\bf discrepancy sheaf}, see  Theorem \ref{Discrepancy}.

This is done in  Section 4, using the results of   Section 3 where we generalize Severi's double point formula by
extending Severi's inductive formula almost verbatim, but using the notion of {\bf  the tangent star}
of a singular point $x$: this  is the closure of the union of the lines which are 
  limits of secant lines joining a pair of points $x_1, x_2$
tending to $x$.

\begin{theo}\label{SG}
Let $X \subset \PP : = \PP^{2n}$ be an irreducible nondegenerate subvariety of dimension $n$ having
isolated singularities $x_1, \dots, x_h$. 

Assume that these singularities are either Improper Double Points or have  tangent star of dimension $\leq 2n-1$.

Let $\de$ be the number of the IDP ' s:
then, in the notation of Theorem \ref{SeveriT}, we have

$$ 2  \hat{\de} = 2 \de + \omega_n (X).$$ 

\end{theo}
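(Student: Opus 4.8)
The plan is to carry out Severi's classical inductive argument almost verbatim, isolating the two places where smoothness was used and replacing them by the tangent-star hypothesis. First I would fix a general hyperplane $H$, so that it avoids the finitely many $x_i$ and meets $X$ transversally; then $Z := X \cap H \subset H \iso \PP^{2n-1}$ is smooth, and I would choose a general centre $O \in H$, so that $\hat X = \pi_O(Z) \subset \PP^{2n-2}$. Because $O \in H$, a secant line of $Z$ through $O$ is exactly a secant line of $X$ through $O$ lying in $H$, and such lines are precisely the ones producing the Improper Double Points of $\hat X$.

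I would then let $\Gamma_O \subset X$ be the closure of the curve swept by the endpoints of the secant lines of $X$ through $O$ (equivalently the preimage under $\pi_O \colon X \ra \PP^{2n-1}$ of the double-point curve of the projection). Each secant through $O$ lying in $H$ contributes its two endpoints to $\Gamma_O \cap H$, whereas a secant through $O$ not contained in $H$ meets $H$ only at $O \notin X$ and so contributes no endpoint; hence $\Gamma_O \cap H$ consists of exactly $2\hat\de$ points and $2\hat\de = \Gamma_O \cdot H = \deg \Gamma_O$. The theorem is thus reduced to the identity $\deg \Gamma_O = 2\de + \omega_n(X)$, which I would establish by Severi's enumeration of $\deg \Gamma_O$ as a tangential (class) part plus a nodal part.

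For the tangential part, the two endpoints of a pair of collinear points degenerate exactly when the secant through $O$ becomes a tangent line, i.e. at the smooth points $p$ with $O \in T_p X$. Via the Gauss map $\ga \colon X' \ra Gr(n,2n)$ this locus is the pullback of the Schubert cycle of $n$-planes through a point, cut out by a section of $Q_{X'}$, so by the Section~2 identification $Q_{X'} = N_f(-1)$ it has class $c_n(Q_{X'}) = \omega_n(X)$; this is the key enumerative input, and it supplies the summand $\omega_n(X)$.

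The nodal part is where the tangent-star hypothesis does the work. The endpoints of $\Gamma_O$ accumulating at a singular point $x_i$ are limits of secants at $x_i$, hence lines of the tangent star $T^{\star}_{x_i} X$; such a limiting secant passes through $O$ only if $O \in T^{\star}_{x_i} X$. If $\dim T^{\star}_{x_i} X \le 2n-1$ the tangent star is a proper subvariety, a general $O$ avoids it, and $x_i$ contributes nothing. At an Improper Double Point the two transverse branches span $\PP^{2n}$ and the limits of their joining secants fill the whole space, so $T^{\star}_{x_i} X = \PP^{2n} \ni O$, and locally $\Gamma_O$ behaves as at an ordinary node, contributing $2$; summing over the $\de$ such points gives $2\de$, whence $2\hat\de = \deg \Gamma_O = 2\de + \omega_n(X)$. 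The main obstacle is precisely this local analysis: proving that $\dim T^{\star}_{x_i} X \le 2n-1$ forces the vanishing of the local contribution (controlling all limits of secants through the fixed $O$), that an Improper Double Point contributes exactly $2$, and that this behaviour survives both the projection from $O$ and the passage to the hyperplane section; together with arranging the genericity of $H$ and of $O \in H$ that makes $\Gamma_O$ reduced, transverse to $H$, and all Improper Double Points of $\hat X$ honest. Everything else is Severi's computation carried over unchanged.
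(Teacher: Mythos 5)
Your overall skeleton matches the paper's: run Severi's inductive argument, observe that $2\hat\de$ is the number of points of the secant curve $\Ga$ lying in the hyperplane $H$, and use the tangent-star hypothesis to guarantee that a general centre $O$ avoids the tangent stars of the non-IDP singularities, so that $\Ga$ misses those points and only the IDP's and the tangency points can contribute. That last observation is exactly the new ingredient of this theorem, and you have it right.

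The gap is in the central enumerative step. You assert $\deg\Ga_O = 2\de + \omega_n(X)$ ``by Severi's enumeration of $\deg\Ga_O$ as a tangential part plus a nodal part'', i.e.\ you identify the degree of the curve $\Ga_O$ with the number of coincidences of the secant involution $\tau$ on it (the $\omega_n(X)$ tangency points plus the $2\de$ branch-pairs over the IDP's). But the degree of a curve is its intersection number with a hyperplane, whereas the coincidences of $\tau$ are a finite set of special points on the curve with no a priori relation to a hyperplane section; a fixed-point count for an involution on a curve is governed by Riemann--Hurwitz, hence by genera rather than degrees, and the equality $\deg\Ga_O = 2\de + \omega_n(X)$ is precisely the content of the theorem (given your correct identity $\deg\Ga_O = 2\hat\de$), not a tool for proving it. What the paper actually does, following \cite{Ca79}, is apply the correspondence principle after projecting from the codimension-two centre $L$: the symmetric incidence curve $\Ga'_2 \subset X'\times X'$ maps to a symmetric curve $\Ga^0 \subset \PP^1\times\PP^1$ of bidegree $(2\hat\de,2\hat\de)$, whose intersection number with the diagonal is $4\hat\de$; the preimage of the diagonal consists of the $2\hat\de$ pairs $(R_i,\tau R_i)$ of collinear points both lying in $H$, the $2\de$ pairs over the IDP's, and the $\omega_n(X)$ tangency points, whence $4\hat\de = 2\hat\de + 2\de + \omega_n(X)$. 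The term $2\hat\de$ that appears on both sides --- coming from distinct collinear pairs that acquire equal image only after projection to $\PP^1$ --- is what makes the bookkeeping close up, and your version silently cancels it. To repair the argument you would need either to reinstate this $\PP^1\times\PP^1$ computation, together with the transversality statements from \cite{Ca79} ensuring that each listed preimage point counts with multiplicity one, or to give an independent proof that $\deg\Ga_O$ equals the coincidence number; as written, that step is asserted rather than proved.
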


Of course our above partial results raise several questions. 

First of all, in  Theorem \ref{SG} we observe that IDP's have full  tangent star (the tangent star  is the whole space).

There is an obvious generalization of  Theorem \ref{SG}, for varieties with isolated singularities: but the main problem
here is to calculate exactly the contribution of the points with full tangent star
to the double point formula. This problem is related to work of   Flenner, O' Carroll and Vogel and others, who defined a cycle which gives (as far as we understand) the contribution to this multiplicity
see \cite{FCV}.

This said, the topic of double point formulae can been treated in so many different ways, that it is a very challenging and currently
active area of research 
to investigate the relations among the several approaches: 
our approach here was led by the problem we wanted to solve, and by the search of effectively computable formulae.

The expert reader may notice that the resolution of indeterminacy   of the Gauss map was
later called  Nash blowing up, and used by Mather to define Chern classes of singular varieties \cite{Na95}, \cite{Ma74}.
However, Mather-Chern classes are defined by taking the push forward of the Chern classes of the pull back (under the Gauss map) 
of the universal
subbundle, whereas we need those of the quotient bundle to be pushed forward, and again we are missing multiplicativity.
MacPherson \cite{Ma74}  introduced functorial Chern classes of singular varieties; Piene \cite{piene}  obtained expressions relating the polar classes to the Chern-Mather classes, and  used these to calculate local Euler obstructions of hypersurfaces  in terms of Milnor numbers. Recently Aluffi related the MacPherson Chern classes to the Fulton Chern classes first \cite{Al94} in the case of hypersurfaces, and later, \cite{Al18}, in  greater generality. There has been other related recent work, such as \cite{Al16}, \cite{bss}.

It would be interesting to see whether some of these ideas shed some light on the problem of  embeddings of canonical models in higher dimension,
but we postpone this investigation to the future.

\section{The classical double point formula}

The classical double point formula established by  Severi in 1902 concerns the following situation:

$X$ is an irreducible subvariety of dimension $n$ in $\PP^{2n}$, whose  singularities are only {\bf Improper Double Points = IDP 's } , that is isolated singularities consisting of two smooth branches 
intersecting transversally.

Letting $X'$ be the normalization of $X$, $X'$ is smooth projective, and we have a normalization map $\nu : X' \ra  \PP^{2n}$ which is an immersion, hence we have the Gauss map
$$ \ga :  X' \ra Gr (n,2n) $$
associating to $x \in X'$ the projective linear subspace image of the tangent space $T X'_x$ under the derivative $D \nu_x$ of $\nu$ at $x$. 

\begin {defin}
The n-th {\bf ceto} $\omega_n (X)$ of  $X \subset \PP^{2n}$  is the number of subspaces $ \ga (x)$  passing through a general point $ O \in \PP^{2n}$.

More generally, for any projective variety  $X$ of dimension $n$ in $\PP^N$,  $X \subset \PP^N$   , the n-th {\bf ceto} of $X$, $\omega_n (X)$, is the number of linear subspaces of dimension $n$
tangent to $X$ at smooth points, and intersecting a general linear subspace of codimension $2n$ in $\PP^N$.

The i-th {\bf ceto} of $X$, $\omega_i (X)$,  is the i-th {\bf ceto} of the intersection $X \cap  L'$ of $X$ with a general subspace $L'$ of codimension $n-i$.
\end{defin}

The following is Severi's main assertion (\cite{Se02}, see also \cite{Ca79} for a more detailed proof) 

\begin{theo} {\bf (Severi' s Inductive Statement)}\label{SeveriT}
Let $X \subset \PP^{2n}$ be a subvariety of dimension $n$ having  only $\de$  IDP' s as singularities, let  $ O \in \PP^{2n}$ be a general point,
let $ L$ be a general linear subspace of $ \PP^{2n}$ of codimension $2$, let $H$ be the hyperplane spanned by $O$ and $L$.

Then, denoting by $\hat{X'}$ the (smooth)  intersection $X \cap H$, the projection with centre $O$ to $L$ of $\hat{X'}$ is a variety $\hat{X}$
with only $\hat{\de}$  IDP' s as singularities, and we have 

$$ 2  \hat{\de} = 2 \de + \omega_n (X).$$ 

\end{theo}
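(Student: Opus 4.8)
The plan is to reduce the statement to a single degree computation for the double-point curve of the projection from $O$, and then to evaluate that degree by separating the contribution of the smooth locus of $X$ (which will produce $\om_n(X)$) from that of the Improper Double Points (which will produce $2\de$). First I would settle the genericity. By Bertini, for general $H$ the section $\hat{X'}=X\cap H$ is smooth of dimension $n-1$, avoids the finitely many IDP's of $X$, and is nondegenerate in $H\cong\PP^{2n-1}$. The union of the embedded tangent spaces of $\hat{X'}$ has dimension at most $2n-2<\dim H$, so a general $O\in H$ lies on no such tangent space; hence $\pi_O$ is unramified along $\hat{X'}$ and, for general $O$, produces no cusps, triple points, or stationary secants. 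Thus $\hat{X}=\pi_O(\hat{X'})$ has exactly $\hat{\de}$ IDP's, where $\hat{\de}$ is the number of secant lines of $\hat{X'}$ through $O$.

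Next I introduce the double-point curve $\widetilde{D}\subset X$, the closure of the set of smooth points $x$ for which $\overline{Ox}$ meets $X$ in a further point. The key elementary observation is that, since $O\in H$, if $x\in\widetilde{D}\cap H$ has partner $y$, then $\overline{Ox}=\overline{xy}\subset H$, whence $y\in X\cap H=\hat{X'}$ as well; consequently the points of $\widetilde{D}\cap H$ are exactly the pairs of endpoints of the secants of $\hat{X'}$ through $O$. Since $O\notin\widetilde{D}$, the hyperplanes through $O$ cut $\widetilde{D}$ with no base point, so the general such $H$ meets $\widetilde{D}$ transversally in $\deg\widetilde{D}$ reduced points. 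This yields the clean reduction $2\hat{\de}=\deg\widetilde{D}$, and it remains to prove $\deg\widetilde{D}=2\de+\om_n(X)$. I would analyse the limiting positions of the secants through $O$ parametrised by $\widetilde{D}$. Over the smooth locus the boundary of the family of honest secants occurs where the two endpoints collide at a smooth point $x$, so that the limiting line is tangent, i.e. $O\in T_xX$ (the embedded tangent $n$-plane); these configurations are counted, with multiplicity one for general $O$, by the $n$-th ceto $\om_n(X)$, the number of tangent $n$-planes through $O$. This is exactly where the Gauss map, and the later identities $\om_n(X)=c_n(Q_{X'})$ and $Q_{X'}=N_f(-1)$, enter as a cross-check. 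Near an IDP $x_i$ the situation is governed by the tangent star: since an IDP has full tangent star, the general line $\overline{Ox_i}$ is a limiting secant, so $x_i\in\widetilde{D}$, and analysing the two transverse branches of $X$ at $x_i$ separately shows that $\widetilde{D}$ acquires one local branch from each, contributing $2$ to $\deg\widetilde{D}$; summed over the $\de$ improper double points this gives $2\de$.

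The main obstacle is precisely this last bookkeeping: proving that for general $O$ each IDP contributes exactly $2$ (and not more) to $\deg\widetilde{D}$, and that each tangent $n$-plane through $O$ is counted with multiplicity exactly one, so that no hidden excess appears. Controlling these local multiplicities is where the generality of $O$ and the transversality of the two branches at an IDP are used in an essential way. Equivalently, one may verify the count by computing the class of $\widetilde{D}$ through Fulton's double-point class for the composite $\pi_O\circ\nu:X'\to\PP^{2n-1}$ on the normalization $\nu:X'\to X$, where the two preimages of each IDP are automatically forced into the double locus and supply the term $2\de$, while the remaining term is identified with $\om_n(X)$. Either route reduces the entire assertion to a transversality and multiplicity verification in general position.
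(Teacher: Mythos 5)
Your first half is sound and coincides with Step II of the argument the paper actually gives (for the more general Theorem~\ref{SG} in Section~3, following \cite{Ca79}): for a general pair $(O,H)$ with $O\in H$ the section $\hat{X'}=X\cap H$ is smooth and avoids the IDP's, any secant of $X$ through $O$ with an endpoint in $X\cap H$ is entirely contained in $H$, and the double--point curve $\Gamma=\widetilde D$ meets $H$ in $2\hat\de$ reduced smooth points, so that $2\hat\de=\deg\widetilde D$. The gap is in the second half. The equality $\deg\widetilde D=2\de+\om_n(X)$ is the entire content of the theorem, and you propose to obtain it by counting the \emph{limiting positions} of the secants through $O$: the $\om_n(X)$ tangent $n$--planes through $O$ and the two branches at each IDP. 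But the degree of a curve is its intersection number with a hyperplane; it bears no a priori relation to the number of distinguished boundary points of a family parametrized by that curve. Your count produces a list of special points \emph{on} $\widetilde D$, not its degree, and no amount of local multiplicity bookkeeping at those points can close this gap: what is missing is a mechanism that converts one count into the other. A telling symptom is that your argument never actually uses the codimension--two subspace $L$, which in the statement (and in the real proof) is not merely the target of the projection but the engine of the computation.

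That mechanism, in the paper, is Severi's correspondence argument (\cite{Ca79}, pp.~766--773, reproduced in the proof of Theorem~\ref{SG}). One passes to the curve $\Ga_2'\subset X'\times X'$ of ordered pairs of endpoints of (limit) secants through $O$, equipped with the involution $\tau$ exchanging endpoints, and maps it to $\PP^1\times\PP^1$ by $\psi=(\phi,\phi)$, where $\phi$ is the projection with centre $L$. The image is a symmetric curve of bidegree $(2\hat\de,2\hat\de)$, hence meets the diagonal $\De$ in $4\hat\de$ points; on the other hand $\psi^{-1}(\De)$ consists precisely of the $2\hat\de$ points lying over $\Ga\cap H$, the $2\de$ ordered pairs over the IDP's, and the $\om_n(X)$ fixed points of $\tau$ (the points whose embedded tangent space contains $O$), each with multiplicity one for general $O,L$. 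Equating the two counts gives $4\hat\de=2\hat\de+2\de+\om_n(X)$, which is the claim. Your closing alternative --- computing the class of $\widetilde D$ via Fulton's double--point class of $\pi_O\circ\nu:X'\ra\PP^{2n-1}$ --- could indeed be developed into a genuinely different proof (essentially Section~2 of the paper run in reverse), but as written it is only named, not carried out, and it is exactly the step that would have to bear all the weight.
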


Using the inductive statement, and observing that, for a plane curve of degree $d$ and with $\de'$ double points, the 1-ceto is just its class, the number of tangents through a general
point, hence equal to $d (d-1) - 2 \de '$, Severi obtains the following result:

\begin{theo} {\bf (Severi' s Double Point Formula)}
Let $X \subset \PP^{2n}$ be a subvariety of dimension $n$ and degree $d$ having  only $\de$  IDP' s as singularities:

 then we have 

$$ 2 \de  = d (d-1) - \sum_{i=1}^n  \omega_i (X).$$ 

\end{theo}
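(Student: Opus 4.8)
The plan is to argue by induction on $n$, using Severi's inductive statement (Theorem \ref{SeveriT}) as the engine and establishing how the cetos behave under one step of the construction. For the base case $n=1$, $X \subset \PP^2$ is a plane curve of degree $d$ with $\de$ ordinary double points, and its first ceto $\omega_1(X)$ is by definition the class of the curve, the number of tangent lines through a general point. As recalled in the introduction this class equals $d(d-1) - 2\de$, which is precisely the asserted identity $2\de = d(d-1) - \omega_1(X)$.

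For the inductive step, suppose the formula holds for all subvarieties of dimension $n-1$ in $\PP^{2(n-1)}$ with only IDP's. Given $X \subset \PP^{2n}$ as in the statement, form $\hat{X} \subset \PP^{2n-2} = \PP^{2(n-1)}$ as in Theorem \ref{SeveriT}: first cut $X$ with a general hyperplane $H$ to get the smooth variety $\hat{X'} = X \cap H$ of dimension $n-1$ in $H \cong \PP^{2n-1}$, then project from a general point $O \in H$ to $L \cong \PP^{2n-2}$. Two invariance facts are needed. First, the degree is preserved, $\deg \hat{X} = d$: a general hyperplane section preserves the degree, and projection from a general point $O \notin X$ is birational onto its image (as $\hat{X'}$ is not a cone), hence degree-preserving. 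Second, and this is the heart of the matter, the lower cetos are unchanged:
$$\omega_i(\hat{X}) = \omega_i(X), \qquad i = 1, \dots, n-1.$$

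To prove this last identity I would split it into the two operations. For the hyperplane section, the definition of the lower cetos already proceeds through general linear sections: $\omega_i(X\cap H)$ is computed as the top ceto of $(X \cap H) \cap L''$ with $L''$ of codimension $(n-1)-i$ in $H$, and $(X\cap H)\cap L'' = X \cap (H \cap L'')$, where $H \cap L''$ is a general subspace of codimension $n-i$ in $\PP^{2n}$ cutting out a $\PP^{n+i}$; this is exactly the datum computing $\omega_i(X)$, so $\omega_i(X \cap H) = \omega_i(X)$ for $i \le n-1$. For the projection from $O$, the claim is that the enumerative count defining each ceto is projection-invariant. Reducing again to the top ceto of an $i$-dimensional linear section, a tangent $i$-plane $T$ to $\hat{X'}$ (not through the general point $O$) projects isomorphically to a tangent $i$-plane to $\hat{X}$, and $\pi(T)$ meets a general linear subspace $\Lambda \subset L$ precisely when $T$ meets the cone $O * \Lambda$ with vertex $O$; since $O$ is general, this correspondence is generically one-to-one and preserves the count. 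This projection step is where I expect the most care to be required: one must verify that constraining the auxiliary linear subspace to contain the fixed centre $O$ does not change the enumerative number, i.e. that the relevant incidence correspondence has the same degree whether its linear conditions are in fully general position or forced through a general point.

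Granting these two facts, the induction closes immediately. Applying the inductive hypothesis to $\hat{X}$ (dimension $n-1$, degree $d$, with $\hat{\de}$ IDP's) gives
$$2\hat{\de} = d(d-1) - \sum_{i=1}^{n-1} \omega_i(\hat{X}) = d(d-1) - \sum_{i=1}^{n-1} \omega_i(X).$$
Substituting Severi's inductive relation $2\hat{\de} = 2\de + \omega_n(X)$ from Theorem \ref{SeveriT} and solving for $2\de$ yields
$$2\de = d(d-1) - \sum_{i=1}^{n-1}\omega_i(X) - \omega_n(X) = d(d-1) - \sum_{i=1}^{n}\omega_i(X),$$
which is the desired formula.
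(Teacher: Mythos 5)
Your proof follows exactly the route the paper takes (and attributes to Severi): the base case is the Pl\"ucker class formula for plane curves, and the induction is driven by Theorem \ref{SeveriT} together with the invariance of the degree and of the lower cetos $\omega_1,\dots,\omega_{n-1}$ under the section-and-projection step. The paper states this in one sentence and leaves the ceto-invariance implicit in the definition of the $\omega_i$ via general linear sections, whereas you spell it out and correctly flag the only delicate point (that imposing the auxiliary linear spaces to pass through the general centre $O$ does not alter the enumerative counts); this is a faithful, slightly more detailed version of the same argument.
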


The modern  version of the double point formula uses holomorphic vector bundles and Chern classes, and can be geometrically explained as follows,
over the complex numbers.

Define the normal bundle $N_{\nu}$  to the map $\nu$ through the exact sequence
associated to the derivative of $\nu$

$$ 0 \ra T_{X'} \ra \nu^* (T_{\PP^{2n}} ) \ra N_{\nu} \ra 0  .$$

The exponential map yields an oriented submersion of a neighbourhood of the $0$-section of  $N_{\nu}$ (which we identify to $X'$) onto a neighbourhood of $X$,
and one can find a small perturbation of the $0$-section yielding an oriented submanifold $X'_t$  intersecting  $X'$ transversally  in a finite number of points,
so that the self intersection $(X')^2$ of $X'$ in $N_{\nu}$ equals the Euler class of the bundle $N_{\nu}$, in turn equal to the top Chern class $c_n (N_{\nu})$.

It is easy to see that the self intersection of $X$ equals  $X^2 = 2 \de + (X')^2$: since $X$ has degree $d$, we get 
(see \cite{LMS75} for an algebraic proof)

\begin{theo} {\bf (Modern Double Point Formula)}
Let $X \subset \PP^{2n}$ be a subvariety of dimension $n$ and degree $d$ having  only $\de$  IDP' s as singularities:

 then we have

$$ (DP) \ d^2 =  2 \de +   c_n (N_{\nu}) = 2 \de + c_n ( \nu^* (T_{\PP^{2n}} ) - T_{X'} ) .$$ 

\end{theo}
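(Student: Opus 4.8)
The plan is to follow Severi's proof of Theorem~\ref{SeveriT} almost verbatim, the only genuinely new ingredient being the control of the singular points through their tangent stars. First I would fix the configuration as in Theorem~\ref{SeveriT}: a general point $O$, a general subspace $L$ of codimension $2$, and the hyperplane $H$ spanned by $O$ and $L$. Since the singular points $x_1, \dots, x_h$ are finite in number and $H$ is general, $H$ misses all of them, so the section $\hat{X'} = X \cap H$ is a smooth $(n-1)$-dimensional variety and the projection $\pi_O : \hat{X'} \to L$ is defined on a smooth source. The tangential variety of $\hat{X'}$ has dimension at most $2(n-1) = 2n-2 < 2n-1 = \dim H$, so a general $O \in H$ lies on no embedded tangent line of $\hat{X'}$ and $\pi_O$ is an immersion; on the other hand the secant variety of $\hat{X'}$ has dimension at most $2n-1 = \dim H$, so $O$ lies on finitely many secants. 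Hence every singularity of $\hat{X} = \pi_O(\hat{X'})$ is an Improper Double Point arising from a secant line of $X$ contained in $H$ and passing through $O$, and $\hat{\de}$ equals the number of such secants. Passing to ordered pairs of points of $X'$ over these secants (equivalently, to the double cover of the double locus) introduces the factor $2$, so the whole identity is naturally an identity between doubled counts.

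Next I would run Severi's secant-counting correspondence. Working on the normalization $\nu : X' \to \PP^{2n}$, I consider the closure of the correspondence of ordered pairs $(a,b) \in X' \times X'$ whose images are collinear with $O$, and I organize the count of $2\hat{\de}$ according to the strata of this correspondence. Over the smooth locus this is exactly Severi's original analysis, producing two contributions: the genuine secants through $O$, and the limiting diagonal locus of tangent lines through $O$, the latter contributing precisely the $n$-th ceto $\omega_n(X)$ by the definitions of the Gauss map and of $\omega_n$. It then remains only to identify the contribution supported over the singular points.

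The heart of the generalization is the analysis at the singular points $x_i$. A pair $(a,b)$ of points of $X'$ degenerating to a point over $x_i$ contributes to the count of secants through $O$ only if $O$ lies on a line which is a limit of secants $\overline{\nu(a)\nu(b)}$ as $(a,b)$ tends to that point; by definition every such limit line lies in the tangent star $T^{*}_{x_i} X$. For an Improper Double Point the two transverse smooth branches have tangent spaces spanning $\PP^{2n}$, so $T^{*}_{x_i} X = \PP^{2n}$ is full; hence $O$ lies in it automatically and the point contributes exactly as in the classical case, the two ordered pairs over it yielding in total the term $2\de$. For a non-IDP singularity we have by hypothesis $\dim T^{*}_{x_i} X \leq 2n-1$; since $\bigcup_i T^{*}_{x_i} X$ is then a finite union of subvarieties of dimension $\leq 2n-1$, its complement is dense and a general $O$ avoids it, so these points carry no limiting secant through $O$ and contribute $0$. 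Assembling the three contributions yields $2\hat{\de} = 2\de + \omega_n(X)$ with no extra terms.

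The main obstacle I anticipate is making this stratified count rigorous at the singular points: one must show that the \emph{only} way a singular point can feed into $\hat{\de}$ is through a limiting secant at that point, so that membership of $O$ in the tangent star is genuinely the deciding condition, and that excluding $O$ from the tangent stars indeed forbids all extra double points of $\hat{X}$ near the images of the $x_i$, ruling out a priori non-reduced or infinitely-near contributions. Concretely this requires a transversality argument, valid for general $(O,H)$, guaranteeing that the secant correspondence has no unexpected components or boundary terms over the $x_i$ beyond those dictated by the tangent star. This is exactly the technical point at which Severi's classical argument, written for Improper Double Points only, must be upgraded, and where the hypothesis $\dim T^{*}_{x_i} X \leq 2n-1$ is essential.
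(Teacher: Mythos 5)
Your proposal proves the wrong statement. What you sketch is the argument for Severi's inductive statement (Theorem \ref{SeveriT}) and its extension to singularities with degenerate tangent star (Theorem \ref{SG}): your conclusion is the identity $2\hat{\de} = 2\de + \omega_n(X)$, obtained by counting secants through a general point $O$ via the secant correspondence. The statement you were asked to prove is the Modern Double Point Formula $d^2 = 2\de + c_n(N_{\nu})$. These are different assertions: yours relates the number of double points of $X$ to that of a hyperplane section projected one dimension down, while $(DP)$ is a closed formula on $X'$ itself involving the top Chern class of the normal bundle $N_{\nu}$ of the immersion $\nu : X' \ra \PP^{2n}$. Nowhere in your argument do the normal bundle, any Chern class, or the quantity $d^2$ appear; moreover the entire tangent-star apparatus is vacuous here, since the hypothesis of $(DP)$ is that \emph{all} singularities are IDP's, so there are no non-IDP points to control.

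The paper's own proof of $(DP)$ is a short self-intersection argument: $d^2$ is the self-intersection number $X\cdot X$ in $\PP^{2n}$; a tubular neighbourhood of $X$ is built from the normal bundle $N_{\nu}$ via the exponential map, and a generic perturbation of the zero section shows that $X\cdot X$ splits as $2\de$ (the local contribution of the two transverse branches at each IDP) plus the self-intersection of the zero section in $N_{\nu}$, which is the Euler class, i.e.\ $c_n(N_{\nu})$; the second expression $c_n(\nu^*(T_{\PP^{2n}}) - T_{X'})$ then follows from multiplicativity of Chern classes in the defining exact sequence of $N_\nu$. If you insist on deriving $(DP)$ from the Severi-style inductive identity instead, you would still need to (i) iterate the induction all the way down to plane curves to obtain $2\de = d(d-1) - \sum_{i=1}^n \omega_i(X)$, and (ii) carry out the Chern-class dictionary of Section 2, namely $\omega_n(X) = c_n(\ga^*Q)$, $\ga^*Q \cong N_{\nu}(-H)$, and the restriction formula relating $c_n(N_{\nu})$, $c_n(N_{\nu}(-H))$ and the hyperplane section. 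None of these steps is present in your write-up, so as it stands the proposal does not establish the theorem.
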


Using then the    multiplicativity of Chern classes for exact sequences, and that the total Chern class $ c (T_{\PP^{2n}} ) = c(\hol (1))^{2n+1}$,
if we denote by $H$ the pull back of the hyperplane class to $X'$, we get

$$2 \de  =  d^2  -   [ c(T_{X'} )^{-1} (1 + H) ^{2n+1}]_n,$$
where $[ c_0 +  c_1 + \dots + c_n ] _n$ denotes the part $c_n$ of dimension $2n$ of the sum in the cohomology ring (or in the Chow ring).

 The most  interesting (not only historical) question is then: how are the two formulae related ?

One deals with projective invariants of $X$, the other with invariants of the abstract variety $X'$, but indeed the Gauss map $\ga$ produces
some vector bundles on $X'$.

Denote as usual by $V$ the complex vector space such that $\PP (V) = \PP^{2n}$ (here $\PP (V) $ is the variety of 1-dimensional subspaces of $V$):
then on the Grassmann variety $G : = Gr (n,2n) $ we have an exact sequence of vector bundles

$$ 0 \ra U \ra V \otimes \hol_G  \ra Q \ra 0,$$

where $U$ is the universal subbundle, whose fibre at a subspace $W \subset V$  is tautologically  equal to  $W$,
while $Q$ is the quotient bundle.

In our situation, a point $O \in \PP : = \PP (V)$ is the class $[v]$ of a vector $v \in V \setminus\{0\}$, and $v$ defines a regular section of  $V \otimes \hol_G$,
and, by composition, $s_v : \hol_G \ra Q$. The section $s_v$ vanishes exactly at the subspaces $W$ which contain $v$.

Taking the pull-back under the Gauss map, we get 

$$ 0 \ra \ga^* U \ra V \otimes \hol_{X'}  \ra \ga^* Q \ra 0,$$
and we see then that $\omega_n(X)$ equals to the numbers of zeroes of $\ga^*(s_v)$, hence

$$ \omega_n(X) = c_n (\ga^* Q).$$

The Euler sequence in $\PP (V) = : \PP$  pulls back to $X'$ yielding 

$$ 0 \ra \nu^* \sU \ra  V \otimes \hol_{X'}  \ra \nu^*  T_{\PP} (-1) \ra 0 .$$ 

Here $\sU \cong \hol_{\PP} (-1)$ is the tautological subbundle on $\PP$, and an easy but important observation
is the inclusion

$$  \nu^* \sU \subset \ga^* U \  .$$

Denoting by $\hol_{X'} (H) : = \nu^*  \hol_{\PP} (1)$, the derivative $ D \nu$ yields, after twisting,  the   exact sequence 

$$ (*) \ 0 \ra T_{X'} (-H) \ra \nu^* (T_{\PP}(-1) ) \ra N_{\nu} (-H) \ra 0  .$$

Moding out the other  two previous exact  sequences by $\nu^* \sU$, we get 

$$ (**) \ 0 \ra (\ga^* U) / (\nu^* \sU)  \ra \nu^* (T_{\PP}(-1) )   \ra \ga^* Q \ra 0.$$

Since $\nu$ is an immersion, $D \nu_x : TX' _x \ra  (\ga^* U / \nu^* \sU)_x$
is an isomorphism, hence $(*)$ and $(**)$ are the same exact sequences.  In particular, we have an isomorphism $\ga^* Q \simeq N_{\nu} (-H)$.

Restricting $N_{\nu}$ to a hyperplane section $ H = \hat{X'}$, we obtain:

$$  0  \ra N_{\nu} (-H) \ra   N_{\nu}  \ra  N_{\nu}| \hat{X'} : = N_{\nu} \otimes \hol_{\hat{X'} } \ra 0  ,$$
yielding $ c ( N_{\nu} ) = c ( \ga^* Q ) \cdot c ( N_{\nu}| \hat{X'} ) $  for the total Chern classes.

The upshot here is: the restriction $N_{\nu}| \hat{X'}$ is the normal bundle $N_{\nu'}$
of the map $\nu ' :  \hat{X'} \ra \PP^{2n-1}$.

On the other hand, projection from the point $O$ yields a map $$\hat{\nu} :  \hat{X'} \ra \PP^{2n-2}$$
such that 
$$N_{\nu'} = N_{\hat{\nu}} + \hol_{\hat{X'}}  (H)$$
  in the K-group, hence 
$$c( N_{\hat{\nu}}  ) = c ( N_{\nu'} ) (1 + H)^{-1}{\color{red} .}$$
in the Chow ring.
We can then calculate 
$$  c_n ( \ga^* Q ) =  c_n ( N_{\nu} (-H) )= c_n ( N_{\nu})  -H c_{n-1} ( N_{\nu}) + \dots + (-1)^n H^n  $$
$$ =  c_n ( N_{\nu})  - H ( c_{n-1} ( N_{\nu} -  \hol_{X'} (H))) =  c_n ( N_{\nu}) -  i_* (c_{n-1}  ( ( N_{\nu'} -  \hol_{\hat{X'}} (H))))=$$
 $$=  c_n ( N_{\nu}) -  i_* (c_{n-1}  ( N_{\hat{\nu}} ))
 $$

where  $ i : \hat{X'} \ra X'$ is 
the natural inclusion.

We have thus shown   
$$   \omega_n (X) = c_n  ( N_{\nu} )   -  i_* (c_{n-1}  ( N_{\hat{\nu}}  )),$$ 
which, using formula $(DP)$  for the double points, gives Severi's inductive formula.

\section{ Extension of Severi's Statement}

In this section $X \subset \PP : = \PP^{2n}$ is an irreducible nondegenerate subvariety of dimension $n$ having
isolated singularities. We set $ X^* : = X \setminus Sing(X)$, and let $x_1, \dots, x_h$ be the singular points of $X$.

Severi's basic idea starts with the consideration of the secant map
$$ \s:  (X \times X) \setminus \De_X  \ra \sG : = Gr (1,2n)$$
associating to  $x\neq x'$ the line $ \s (x,x') : = x * x'$.

Taking the closure $\Sigma$ of the graph of $\s$  in $(X \times X) \times \sG$, and on it the pull-back of the projectivized 
universal subbundle $\sL_{\sG} \subset \sG \times \PP$ of the Grassmannian,
we obtain
a line incidence relation
$$ \sL_X \subset \Sigma \times \PP \subset (X \times X) \times \sG \times \PP,$$
which is a $\PP^1$-bundle over $\Sigma$.

There is a  natural surjective projection $ p : \sL_X \ra  (X \times X) $ and a natural
projection $\pi : \sL_X  \ra  \PP$.

\begin{defin}

We define as customary (cf. \cite{Jo78})  for $x \in X$,
$$ \sL_x  : = p^{-1} ( \{ (x,x)\}), $$
and define the  tangent star  of $X$ at $x$ as
$$ Star_x (X) : = \pi (\sL_x).$$
\end{defin}

For  any smooth point $x \in X$, the tangent star is the projective tangent space of $X$ at $x$,
and in general  one has a series of inclusions: 
$$ \sT_x (X) \subset Star_x (X) \subset T^{Zar}X_x,$$
in words: the tangent cone at $x$ is contained  in the tangent star at $x$, which is contained in the 
projective closure of the Zariski tangent space.

Since $\Sigma$ has dimension $n$, the fibre $\Sigma_x$ over $(x,x)$  has dimension at most $2n-1$,
hence in general the  tangent star at $x$ has dimension at most $2n$. Moreover, $\Sigma_x$ consists
of lines passing through $x$, hence

\begin{prop}
The tangent star is a cone with vertex $x$. If $X \subset \PP : = \PP^{2n}$ has dimension $n$, then the following assertions (1), (2), (3) are equivalent: 
\begin{enumerate}
\item
the tangent star at $x$ equals  $\PP^{2n}$;
  
\item
$\Sigma_x$ has dimension $2n-1$;   
 \item
 $\Sigma_x$ is the Schubert cell $C_x$  of all lines
through $x$.
\end{enumerate}

In particular the degree of the map $\sL_x \ra  \PP^{2n}$ is always either $0$ or $1$.
\end{prop}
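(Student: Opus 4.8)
The plan is to reduce the whole statement to elementary dimension-counting on the irreducible variety of lines through $x$. First I would record the containment $\Sigma_x \subseteq C_x$, where $C_x \cong \PP^{2n-1}$ is the Schubert cell of all lines through $x$: a point of $\Sigma_x$ is a limit of triples $(x_t, x'_t, x_t * x'_t)$ with $(x_t, x'_t) \to (x,x)$ away from the diagonal, and since the incidence locus $\{(\ell, y) : y \in \ell\}$ is closed in $\sG \times \PP$, the relation $x_t \in (x_t * x'_t)$ passes to the limit and forces $x \in \ell$ for every limiting line $\ell$. Because $\sL_X$ is the $\PP^1$-bundle whose fibre over $\ell$ is the line $\ell$ itself, this already yields the first assertion: $Star_x(X) = \pi(\sL_x) = \bigcup_{\ell \in \Sigma_x} \ell$ is a union of lines through $x$, hence a cone with vertex $x$. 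It also gives $\dim \Sigma_x \le \dim C_x = 2n-1$, and restricting the bundle shows $\sL_x \to \Sigma_x$ is a $\PP^1$-bundle, so $\dim \sL_x = \dim \Sigma_x + 1 \le 2n$.

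For the equivalences I would run the cycle $(3) \Rightarrow (1) \Rightarrow (2) \Rightarrow (3)$. The implication $(3) \Rightarrow (1)$ is immediate: if $\Sigma_x = C_x$ then $Star_x(X)$ is the union of all lines through $x$, which sweeps out $\PP^{2n}$. For $(1) \Rightarrow (2)$: if $\pi(\sL_x) = \PP^{2n}$ then $\dim \sL_x \ge 2n$, so the bound above is an equality and $\dim \Sigma_x = 2n-1$. For $(2) \Rightarrow (3)$: since $C_x$ is irreducible of dimension $2n-1$, a closed subset $\Sigma_x \subseteq C_x$ of dimension $2n-1$ must contain an irreducible component of top dimension, and such a component, being an irreducible closed subset of $C_x$ of the same dimension, equals $C_x$; hence $\Sigma_x = C_x$.

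For the final clause I would split into the two cases produced by the dichotomy above. If $Star_x(X) \ne \PP^{2n}$, the map $\pi|_{\sL_x}$ is not dominant and so has degree $0$. Otherwise (1)--(3) hold, so $\Sigma_x = C_x$; then for $y \ne x$ the unique line through $x$ containing $y$ is $x * y$, which lies in $C_x = \Sigma_x$, so the assignment $y \mapsto (x * y, \, y)$ furnishes a rational inverse to $\pi|_{\sL_x}$. Thus $\pi|_{\sL_x}$ is birational onto $\PP^{2n}$ and has degree $1$; note this avoids any separability subtlety, since an explicit rational inverse is exhibited.

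The only step requiring genuine care, rather than formal manipulation, is the first one: justifying rigorously that every limit of secants through pairs converging to $x$ contains $x$. I would handle this through the closedness of the incidence correspondence inside $\Sigma \times \PP$ (equivalently, by observing that $\sL_X \to \PP$ restricted over the diagonal factors through lines meeting $x$). Once $\Sigma_x \subseteq C_x$ is in hand, everything else is purely a comparison of dimensions inside the irreducible projective space $C_x \cong \PP^{2n-1}$.
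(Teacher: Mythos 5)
Your proof is correct and follows essentially the same route as the paper: both arguments rest on the containment $\Sigma_x \subseteq C_x \cong \PP^{2n-1}$ and a dimension count there, with (3)$\Rightarrow$(1) immediate and the degree statement reduced to the surjective/non-surjective dichotomy. You merely fill in two details the paper leaves implicit --- the closedness-of-incidence argument for $\Sigma_x \subseteq C_x$ and the explicit rational inverse $y \mapsto (x*y,\,y)$ giving degree $1$ --- which is a welcome but not substantively different elaboration.
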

\Proof
Assertion (1) amounts to $ dim (Star_x(X)) = 2n$, which implies $ dim (\Sigma_x) \geq 2n-1$, 
which is equivalent to  assertion (3) since $\Sigma_x \subset C_x = \PP^{2n-1}$, where $C_x = \PP^{2n-1}$
is the variety (Schubert cell)  of lines through $x$. Assertion (3) obviously implies (1), and also the last assertion is clear, since
the degree is $0$ if and only if the map is not surjective.

\qed

\begin{rem}.

(i) The IDP 's are isolated singularities with {\bf full tangent star}, i.e., whose tangent star is the whole space $\PP^{2n}$.

(ii) Each point of embedding dimension $< 2n$ (that is, the Zariski tangent space has dimension $\leq 2n-1$ {\color{red} )}
has a non full tangent star.

(iii) Observe that $\sL_x$ is taken here with the reduced structure: in general it is part of the ramification
locus of $\pi$.

(iv) see \cite{SUV97} for some related results concerning tangent stars of complete intersections.

\end{rem}

We can now prove a generalization of Severi's theorem  (Theorem \ref{SeveriT}, the inductive assertion):

\begin{theo}\label{SG}
Let $X \subset \PP : = \PP^{2n}$ be an irreducible nondegenerate subvariety of dimension $n$ having
isolated singularities $x_1, \dots, x_h$  only. 

Assume that these singularities are either Improper Double Points or have  tangent star of dimension $\leq 2n-1$.

Let $\de$ be the number of the IDP ' s:
then, in the notation of Theorem \ref{SeveriT}, we have

$$ 2  \hat{\de} = 2 \de + \omega_n (X).$$ 

\end{theo}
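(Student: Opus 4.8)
The plan is to run Severi's inductive argument (Theorem~\ref{SeveriT}) essentially verbatim, with the embedded tangent space at a smooth point replaced by the tangent star at a singular point, and to read off the contribution of each singular point from the dimension of its tangent star.

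First I would settle the general position. Since $Sing(X)=\{x_1,\dots,x_h\}$ is finite, a general hyperplane $H$ misses it, so $\hat{X'}=X\cap H$ lies in $X^*$ and is smooth by Bertini. I then take the centre $O$ general, subject to the extra condition $O\notin Star_{x_i}(X)$ for every $x_i$ whose tangent star has dimension $\leq 2n-1$; this is possible precisely because each such $Star_{x_i}(X)$ is a proper closed subvariety of $\PP^{2n}$ and there are finitely many of them. With these choices the projection of $\hat{X'}$ from $O$ to $L$ has only IDP's, and $\hat{\de}$ is the number of secant lines of $\hat{X'}$ through $O$. As $O\in H=\langle O,L\rangle$, a line through $O$ lies in $H$ if and only if it meets $L$, so these are exactly the secant lines of $X$ through $O$ that meet $L$.

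Next I would reduce to a degree computation on $X$ itself. Let $\phi_O\colon X\dashrightarrow\PP^{2n-1}$ be the projection from $O$, let $X_1=\phi_O(X)$, and let $R\subset X$ be the closure of the double locus, i.e. the set of points lying on a secant of $X$ through $O$. Since $\phi_O$ is generically injective with double curve $D_1=\phi_O(R)$, and $R\to D_1$ has degree $2$ while $\mathcal{O}_{\PP^{2n}}(1)|_R=\phi_O^*\mathcal{O}_{\PP^{2n-1}}(1)|_R$, one gets $\deg R=2\deg D_1=2\hat{\de}$. Thus everything comes down to evaluating $\deg R=R\cdot H$. Here the secant incidence of Section~3 is the right tool: $R$ is the image in $X$ (under $p$ and the first projection) of the one-dimensional family $\pi^{-1}(O)\subset\sL_X$ of limiting secants through $O$, and its behaviour near a point $x$ is governed by the fibre $\sL_x=p^{-1}(x,x)$, that is by the tangent star $Star_x(X)=\pi(\sL_x)$. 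I would then evaluate $\deg R$ following Severi, separating contributions by locus. Over the smooth locus the limiting secants at $x$ fill the embedded tangent $n$-plane, so a limiting secant through $O$ forces $O$ to lie in that plane; the resulting tangential contribution is, by definition of the $n$-th ceto and in agreement with the identification $\om_n(X)=c_n(\ga^*Q)$ of Section~2, exactly $\om_n(X)$. The singular points enter only through the tangent star: if $\dim Star_{x_i}(X)\leq 2n-1$, the choice $O\notin Star_{x_i}(X)$ means no limiting secant at $x_i$ passes through $O$, so $R$ acquires no branch forced by $x_i$ and the point contributes $0$, behaving like a generic smooth point; an IDP has full tangent star and so behaves like a classical node of two transverse smooth branches, contributing $2$ to $\deg R$, whence the $\de$ improper double points contribute $2\de$ in total. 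Summing, $2\hat{\de}=\deg R=2\de+\om_n(X)$.

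The step I expect to be the main obstacle is precisely this last local analysis: quantifying the contribution of each singular point to $\deg R$. One must show that the bound $\dim Star_{x_i}(X)\leq 2n-1$ forces the double curve $R$ to acquire no extra component or branch through $x_i$, so that the point drops out of the count, while a full tangent star reproduces exactly the classical nodal contribution $2$. Making this rigorous will amount to understanding the scheme structure of $\sL_x$ inside the ramification locus of $\pi$ (cf. Remark~(iii)) and matching the local multiplicity of $R$ at $x_i$ with $\dim Star_{x_i}(X)$; the remainder is Severi's inductive argument carried over unchanged.
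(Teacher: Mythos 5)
Your general-position setup and your Step 2 coincide with the paper's: the hypothesis on the tangent stars is used exactly as you say, to choose $O$ outside the finitely many proper closed subvarieties $Star_{x_i}(X)$, so that the double curve (your $R$, the paper's $\Ga$) misses the non-IDP singularities and one only needs to normalize $X$ at the IDP's; and the identification $\deg R=2\hat{\de}$, via the secants of $X$ through $O$ contained in $H=\langle O,L\rangle$, is Step II of the paper's proof. The gap is in your final step, and it is not the local-multiplicity issue you diagnose. The degree of the curve $R$ is not a sum of contributions attached to finitely many special points lying on it: the equation ``$\deg R=2\de+\om_n(X)$'' is (given $\deg R=2\hat{\de}$) exactly the statement to be proved, not a second computation of $\deg R$. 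The quantities $\om_n(X)$ (smooth points whose tangent $n$-plane contains $O$) and $2\de$ (the pairs of branch points over the IDP's) count the \emph{coincidences} of the secant involution $\tau$ on the partial normalization of $R$, and the whole content of Severi's argument is the mechanism relating the number of coincidences to the degree of $R$; no amount of local analysis of $\sL_{x_i}$ at the singular points supplies that global relation.

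The paper closes this gap with a correspondence count. Using the pencil of hyperplanes through $L$, i.e.\ the projection $\phi:\PP\setminus L\ra\PP^1$, one maps the graph $\Ga_2'$ of $\tau$ to $\PP^1\times\PP^1$ by $\phi\times\phi$. The image is a symmetric curve of bidegree $(2\hat{\de},2\hat{\de})$, hence meets the diagonal in $4\hat{\de}$ points; on the other hand the preimage of the diagonal consists of the $2\hat{\de}$ points $(R_i,\tau R_i)$ with $R_i\in\Ga\cap H$ (the pairs whose secant meets $L$), the $2\de$ ordered pairs over the IDP's, and the $\om_n(X)$ fixed points of $\tau$. After checking smoothness of $\Ga_2'$, birationality onto the image, and transversality of all these intersections (Steps I--IV, following \cite{Ca79}), one gets $4\hat{\de}=2\hat{\de}+2\de+\om_n(X)$, which is the claim. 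Without this, or an equivalent coincidence formula of Cayley--Brill/Chasles type, your argument does not close; the ingredients you assembled are the right ones, but the step you flag as ``the main obstacle'' is in fact the entire engine of the proof rather than a technical refinement.
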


\Proof 
Let $Z$ be the image of $\Sigma$ inside the Grassmannian $\sG$. Observe that $ q : \Sigma \ra Z$
factors through the  involution $\tau$ on $\Sigma$ birationally induced by the involution exchanging
the factors of $X \times X$.

Argueing as in Lemma 1 of \cite{Ca79},
we see that $\Sigma \ra Z$ is generically finite of degree $2$, hence $Z$ is irreducible of dimension $2n$,
and birational to $\Sigma / \tau$.

Given a point $O \in \PP$, we let $C_O$ be the Schubert cell of the lines through $O$,
and we consider the curve $$C : = q^{-1} (Z \cap C_O) \subset \Sigma \subset (X \times X) \times \sG.$$ 

Denote by $\Ga_2$ the projection of $C$ inside $X \times X$, respectively by $\Ga$ its projection to $X$
(because of $\tau$ symmetry, the first and second projection give the same result).

Observe that, by our assumption, for general choice of $O$,  $\Ga$ does not pass through
 the singular points of $X$ which are not IDP 's. 
 
 Let $X'$ be the normalization of $X$ precisely  at the IDP 's. 
Then we can construct an incidence variety 
$\Sigma' \subset (X' \times X') \times \sG$ which is birational to $\Sigma$.
Hence we can similarly define curves $C', \Ga'_2, \Ga'$.

The proof proceeds now  exactly as in \cite{Ca79}, pages 766-773 (but with a slightly different notation).

Step I)  Proposition 2 shows that $\Ga_2'$ is a smooth curve, and the graph of  a birational involution $\tau$ on $\Ga'$
such that, if $x'_i, x''_i$ are the two points lying over an IDP $x_i$, then $\tau(x'_i) = x_i''$.

Step II) For general choice of $ O, L$, setting $ H = O * L$, $\hat{X}' = X \cap H$ is smooth and
$\hat{X} \subset L \cong \PP^{2n-2}$ has only IDP' s. Moreover $\Ga \cap L = \emptyset$, 
$H$ intersects $\Ga$ in $deg (\Ga)$ distinct 
smooth points of $\Ga$,  and different from the points
$Q_1, \dots , Q_{\omega_n(X)}$ \footnote{ O is chosen such that $C_O$ intersects  $\overline{\ga(X')} $
transversally at the points $\ga(Q_i)$} where the tangent space to $X$ does not pass through $O$, so that in particular
$\tau$ has no fixed points on $H \cap \Ga$. Denote these points by $R_1, \dots, R_{2m}$.
We observe that $ m = \hat{\de}$: since the points in $\Ga \cap H$ contribute in pairs to IDP 's of $\hat{X}$.

Step III )  Let $\phi_0, \phi_1$ be linear forms such that $H = \{ \phi_0 = 0\}$, $L = \{ \phi_0 = \phi_1 =  0\}$,
and let $\phi : \PP \setminus L \ra \PP^1$ the projection with centre $L$.

Then we have a morphism $\psi : \Ga_2' \ra (\PP^1 \times \PP^1)$, the composition of the projection into 
$X' \times X'$ with the map $\phi \times \phi$.

Step IV) The points $Q_i$ are the  fixed points of the involution $\tau$, hence if $\De$ is the diagonal
in $\PP^1 \times \PP^1$, then 
$$\psi^{-1} (\De) = \{ (R_i, \tau R_i), (P'_j,P''_j), (P''_j,P'_j),(Q_h,Q_h) \}.$$

As proven on page 772 of \cite{Ca79} the points $\psi(Q_h,Q_h)$ are distinct, equivalently,
the points $\phi (Q_h)$ are distinct, in particular $\psi$ yields a birational map from $\Ga'_2 $
to its image $\Ga^0$. The symmetry $\tau$ on $\Ga'_2 $ induces the coordinate exchange
on $\Ga^0$, so that $\Ga^0$ is symmetric, and of bidegree $(2 \hat{\de}, 2 \hat{\de})$,
since  $\Ga \cap H$ consists of  $2 \hat{\de}$) smooth points.

Hence the intersection number of $\Ga^0$ with $\De$ equals  $4 \hat{\de}$; 
but the previous calulation of the inverse image of the diagonal on the smooth curve $\Ga_2'$
shows

$$4 \hat{\de} = 2 \hat{\de}  + 2 \de + \omega_n (X) \Leftrightarrow 2 \hat{\de} =  2 \de + \omega_n (X) {\color{red} .}$$ 

\qed

\begin{rem}
The above proof works  for any  algebraically closed field of characteristic $\neq 2$.

\end{rem}

\begin{remark}
Assume that $X \subset \PP : = \PP^{2n}$ is an irreducible nondegenerate subvariety of dimension $n$ having
isolated singularities $x_1, \dots, x_h$. 

  We have used here Severi's original approach
rather than subsequent developments, as done in \cite{FCV} or \cite{FM}, where the authors  used the St\"uckrad-Vogel cycle.

It seems to us that from these works follows: one can determine,  for  each singular point $P = x_i$, a `number of double points' $\de_P$ (clearly  such that
$\de_P = 0$ if $Star_P(X)$ has dimension $\leq 2n-1$),  with the property  that, defining
$$\de  = \sum_{P \in X} \de_P,$$
we have, in the notation of Theorem \ref{SeveriT}, 

$$ 2  \hat{\de} = 2 \de + \omega_n (X).$$ 

The main question is how to determine these numbers explicitly.
\end{remark}

\begin{example}
Let $X \subset \PP^4$ be the projective cone with vertex $x_0$ over the twisted cubic curve $D\subset \PP^3$.
Then $x_0$ is the only singular point of $X$, and clearly  its tangent star is $\PP^4$. 

Here $D$ projects to a nodal plane cubic, hence $\hat{\de} = 1$.

Here $\omega_2(X) = 0$, since the union of the tangent planes to $X$ has dimension $3$. Hence $\de=1$
and the normal point $x_0$ contributes $1$ to the number of improper double points.

Replacing $D$ by a non degenerate smooth curve of genus $g$ and degree $d$ in $\PP^3$, we see that again 
$\omega_2(X) = 0$, hence  $2 \de_{x_0} = (d-1) (d-2)  -2g $.

\end{example}

\section{Comparing normal sheaf and quotient bundle }

As in the  preceding section (where we have computed    the   number $\de$ of IDP' s )
we consider    irreducible subvarieties $ X \subset \PP := \PP^{2n}$ of dimension $n$ and
with isolated singularities,  which are either IDP 's or have degenerate tangent star (i.e., of dimension $\leq 2n-1$).

In this section we slightly change our notation and denote the normalization of $X$ by $X'$.

In the case where $X$ is a surface, we shall take $\tilde{X} = S$ to be a minimal resolution of singularities of $X'$,
but in general $\tilde{X} $ shall be a resolution of singularities of $X'$, and $ Y \ra \tilde{X} $ a further
resolution such that the Gauss map becomes a morphism $$ \ga : Y \ra G = Gr (n,2n).$$

We let $ f : Y \ra \PP$ be  the morphism composition of the two maps $ Y \ra X' \ra X \subset \PP$, and we shall use the following
selfexplanatory notation: for instance  $ Q_Y = \ga^* Q$ .

 We set  $\hol_Y (H) : = f^*  \hol_{\PP} (1)$. Then, as in Section 2, 
$$ (*) \ 0 \ra T_Y (-H) \ra f^* (T_{\PP}(-1) ) \ra N_f (-H) \ra 0  ,$$

and 

$$ (**) \ 0 \ra U_Y /  \sU_Y  \ra f ^* (T_{\PP}(-1) )   \ra Q_Y \ra 0.$$

Since $f$ is generically an immersion, $D f$ yields a sheaf 
inclusion $T_Y (-H) \subset U_Y /  \sU_Y $ and
associated exact sequences
$$0 \ra T_Y (-H) \ra  U_Y /  \sU_Y  \ra \sF \ra 0 ,$$ 
$$ (***) \ 0 \ra \sF   \ra  N_f (-H)  \ra  Q_Y \ra0 ,$$ 
where the sheaf $\sF$ is supported at the inverse image of the  (finitely many) singular points of $X$
which are different from the IDP' s.

\begin{defin}
We define the sheaf $\sF$ on $Y$ to be the IDP discrepancy sheaf on $Y$. 

\end{defin}

Note that $Y$ is not unique, since not always there is a minimal resolution of the closure of the Gauss map.
\begin{theo} \label{Discrepancy}
Let $X \subset \PP^{2n}$ be a subvariety of dimension $n$ and degree $d$ having  only isolated singularities,
of which $\de$ are IDP' s, and the rest are singular points with degenerate tangent star
and whose associated  discrepancy sheaf  $\sF$  on $Y$ is equal to zero
$\sF = 0$. 

 Then we have 
$$ d^2 =  2 \de +   c_n (N_f ) = 2 \de + c_n ( f^* (T_{\PP^{2n}} ) - T_{Y} ) .$$ 

 If instead $\sF \neq 0$ on $Y$, we have 

$$ d^2 =  2 \de +   c_n (N_f ) + c_n (N_f (-H) - \sF) - c_n (N_f (-H))$$
or, equivalently,
$$ d^2 =   2 \de +   c_n (N_f ) +
[c (N_f (-H)) \cdot c(  \sF)^{-1}]_n - c_n (N_f (-H)) .$$

\end{theo}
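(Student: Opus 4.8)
The plan is to reduce everything to Chern-class bookkeeping on the smooth resolution $Y$, combining Severi's inductive relation of Theorem \ref{SG} with the classical Modern Double Point Formula applied to the projected hyperplane section $\hat X$. The first step is to record, exactly as in Section 2 but now on $Y$, that the Gauss map computes the top ceto: for a general $O = [v]$ the induced section $s_v$ of $Q_Y$ vanishes precisely at the points $y$ with $O \in \gamma(y)$, so that
$$\omega_n(X) = c_n(Q_Y).$$
I would check that no point of the exceptional locus contributes to this count for general $O$: that locus has dimension $\le n-1$ (it lies over the isolated singularities), while the incidence condition $O \in \gamma(y)$ cuts out codimension $n$, so a dimension count on $\{(y,O) : O \in \gamma(y)\}$ forces the fibre over a general $O$ to avoid it.

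Next I would exploit the discrepancy sequence $(***)$. Because $Y$ is smooth, every coherent sheaf has a finite locally free resolution, hence total Chern classes are defined and multiplicative along $(***)$, giving $c(Q_Y) = c(N_f(-H)) \cdot c(\sF)^{-1}$ and therefore
$$\omega_n(X) = c_n(Q_Y) = \bigl[\,c(N_f(-H)) \cdot c(\sF)^{-1}\,\bigr]_n = c_n\bigl(N_f(-H) - \sF\bigr),$$
which already reconciles the two equivalent forms in the statement and collapses to $\omega_n(X) = c_n(N_f(-H))$ when $\sF = 0$.

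The third step is the hyperplane identity, obtained verbatim from the computation in Section 2. Writing $i : \hat Y \hookrightarrow Y$ for a general hyperplane section and $\hat f : \hat Y \to \PP^{2n-2}$ for the projection from $O$, the expansion of $c_n(N_f(-H))$ together with the $K$-theoretic relation $N_f|_{\hat Y} = N_{\hat f} + \hol_{\hat Y}(H)$ yields
$$c_n(N_f(-H)) = c_n(N_f) - i_*\bigl(c_{n-1}(N_{\hat f})\bigr).$$
Here I would use that a general hyperplane misses the isolated singular points of $X$, so that $\hat Y \cong X \cap H$ is smooth and its projection $\hat X \subset \PP^{2n-2}$ has only IDP's and the same degree $d$; the classical Modern Double Point Formula then applies directly to $\hat X$ and reads $d^2 = 2\hat\delta + c_{n-1}(N_{\hat f})$.

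Assembling the three inputs is then routine. Feeding the hyperplane identity and Theorem \ref{SG} into the formula for $\hat X$ gives
$$d^2 = 2\hat\delta + c_{n-1}(N_{\hat f}) = 2\delta + \omega_n(X) + \bigl(c_n(N_f) - c_n(N_f(-H))\bigr),$$
and substituting $\omega_n(X) = [c(N_f(-H))\,c(\sF)^{-1}]_n$ produces the stated formula, which collapses to $d^2 = 2\delta + c_n(N_f)$ in the case $\sF = 0$. The only point demanding genuine care is the multiplicative step $c(Q_Y) = c(N_f(-H))\,c(\sF)^{-1}$: this is exactly where the passage to the smooth $Y$ pays off, since the discrepancy sheaf $\sF$ is supported on the exceptional locus and is in general not locally free, yet on a smooth variety its total Chern class is still defined and $(***)$ remains multiplicative. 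The failure of multiplicativity lamented in the introduction is thus sidestepped precisely by working on the resolution, and I expect the verification of this multiplicativity (together with the clean reduction of $\hat X$ to the smooth IDP case) to be the main technical hurdle.
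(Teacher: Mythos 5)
Your proposal is correct and follows essentially the same route as the paper, whose own proof simply cites Theorem \ref{SG} together with the Section 2 computation relating $c_n(Q_Y)$ to $c_n(N_f)$ via the hyperplane identity, and then plugs in $c(Q_Y) = c(N_f(-H))\,c(\sF)^{-1}$ from the sequence $(***)$. You have merely made explicit the details the authors leave implicit (the dimension count for $\omega_n(X) = c_n(Q_Y)$ on $Y$, and the use of finite locally free resolutions on the smooth $Y$ to justify multiplicativity for the non-locally-free $\sF$), which is exactly where they intend the argument to rest.
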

\Proof
The proof of the first assertion follows from Theorem \ref{SG}, and
 the argument given in  Section 2 to show how to relate $c_n(Q_Y)$
with  $c_n (N_f )$.

The proof of the second assertion follows with the same derivation as above
 plugging in  the relation $c(Q_Y) = c(N_f (-H) ) c(\sF)^{-1}$.

\qed

Of course, for most applications, one has to determine the sheaf $\sF$; this can be done by explicit but sometimes lengthy
calculations. 

For instance, in the  case of isolated hypersurface singularities  $$X = \{ x | F(x) = 0 \}  \subset W : = \CC^{n+1} ,$$
the tangent plane at  the smooth points $ x \in X^* := X \setminus \{0\}$ is the dual (annulator) subspace to the gradient at $x$.
Composing the gradient map with the projection onto $\PP(W^{\vee})$ (the projective space of lines in the vector space
$W^{\vee}$), 
$$  \nabla F :   X^* \ra W^{\vee} \setminus \{0\} , \ p : W^{\vee} \setminus \{0\} \ra \PP(W^{\vee}) = : \PP\,\, ,\,\, f : = p \circ \nabla F  $$ 
and letting $Z$ be the closure of the graph of $f$, we see that 
 the pull back $Q_Z$ of the quotient bundle is locally the direct sum of a trivial summand with 
$f^* (\hol_{\PP} (1))$.

Indeed, if $L \cong \hol_{\PP} (-1)$ is the universal subbundle on $\PP$,  the exact sequence 
$$ 0 \ra L \ra  W^{\vee}  \otimes \hol_{\PP}  \ra T_{\PP} (-1) \ra 0$$
dualizes to
$$ 0 \ra L^{ann} = \Omega_{\PP}^1(1)  \ra  W  \otimes \hol_{\PP}  \ra L^{\vee} =   \hol_{\PP} (1) \ra 0.$$
For simplicity of notation we continue to denote by $f$ the map of $Y$ to $\PP(W^{\vee})$ obtained from
the resolution $ Y \ra Z$.

Since $N_f$ is the cokernel of $T_Y \ra f^* (W  \otimes \hol_{\PP}) $,  we have exact sequences:
$$ 0 \ra \sF \ra N_f \ra  f^* (\hol_{\PP} (1)) \ra 0,$$ 
$$ 0 \ra T_Y \ra f^* (\Omega_{\PP}^1(1)) \ra \sF \ra 0.$$

The second sequence shows that the discrepancy sheaf $\sF$ is a Cohen-Macaulay sheaf of codimension $1$ on $Y$,
supported on an exceptional divisor.

 The first sequence instead identifies, in a neighbourhood of this divisor,  the virtual sheaf $N_f (-H) - \sF$ as a line bundle: $f^* (\hol_{\PP} (1))$.

\begin{example}
Consider a hypersurface singularity in $\CC^3$, of multiplicity equal to $d$, and with tangent cone equal to the cone over a smooth plane curve 
$C$.

Let the local equation be
$$ X : = \{ x =(x_1,x_2,x_3) | F (x) = 0, \ F(x) = P_d(x) + \dots \},$$ 
so that the blow-up $Y$ of $X$ at the origin is smooth,
$$ Y \subset \CC^3 \times \PP^2 , \ Y  = \{ (x, u)) |  x = \la u, \la^{-d} F (\la x) = 0 \},$$
and the Gauss map is a morphism on $Y$, which we view as a hypersurface in the universal subbundle over $\PP^2$.
Here $C = \{ u| P(u)=0\}$ is identified to the execptional curve of the blow-up, and on $Y$ we have $C^2 = -d$.

Here $U / \sU$, at the points of $C$,  equals the cone $U'$ over   the tangent line $T_uC$. If $\phi(t) $ is a local parametrization
of $C$, $ (t,\la) \mapsto \la \phi(t)$, hence the image of the tangent space to $Y$ is the subspace generated by $\la \phi'(t)$ and $\phi(t)$,
while $U'$ is generated by $ \phi'(t)$ and $\phi(t)$. Hence the cokernel $\sF$ is a line bundle on $C$,
equal to $TC(C) = \hol_C((d-2)C)$.

Therefore $c(\sF)^{-1} = 1 - C + C (d-2) C$, and the contribution to $ 2 \de$ equals 
$$ - c_1(N_f) (-C) - (d-2) C^2=  K_Y \cdot C + d (d-2) = - (d-2) C^2 + d (d-2) = 2 d (d-2). $$  
\end{example}

The previous calculation shows that we get a positive contribution to $\de$, equal to $d (d-2)$: this contribution is equal to $0$
if and only if we have a double point.

\begin{remark}
(a) Similar calculations could be done in order to show that the contribution we obtain equals $0$ for all rational double points, which are 
hypersurface singularities with equation $ z^2 = f(x,y)$, where $f$ has multiplicity at most $3$, and no infinitely near triple point.

It is to observe that $Y$ is not here a minimal resolution of the singularity.

We do not pursue this calculation here, since we shall give  an elementary  proof in a later section that rational double points do not contribute to
$\de$.

(b) In the previous  example, the Milnor number of the singularity equals $(d-1)^3$, while the topological Euler characteristic of 
the exceptional divisor $C$ is $ e(C) = - d (d-3)$, hence $e(Y) - e(X) = - [ d(d-3) + 1]$.

Indeed, in this case the contribution equals $  2 [(d-1)^2 - 1]$ which is twice the Milnor number $\mu_1$ of a general hyperplane section
 through the singular point, diminished by $1$. This reminds of a formula by Piene in \cite{piene}, end of page 25; however the number
 $ 1 - \mu_1$ is not zero for rational double points $z^2 - x^3 - y^3 =0$, or $z^2 - x^3 - y^5 =0$ (it is $-1$). This shows that
 our contribution is not the local Euler obstruction.

\end{remark}

\section{Surfaces with rational double points}

In this section we work over  any algebraically closed field  $k$  of characteristic $p \ge 0$  and we show the following theorem for surfaces:

\begin{theorem}\label{thm1} Let $S$ be a smooth projective surface and let
$$\mu : S \to X \subset \BP^4$$
be the minimal resolution of  a surface  $X$ whose singularities are  only $\delta := \delta (X)$  IDP's,  and some rational double points.
Then: 
$$2 \delta = d^2 - 10d + 12 \chi({\mathcal O}_S) - 5(H_S.K_S)_S -2(K_S^2)_S\,\, .$$
Here $H_S = \mu^*H$ for the hyperplane class $H$ of $\BP^4$ and 
$d = (H_S^2)_S = {\rm deg}\, X$. 
\end{theorem}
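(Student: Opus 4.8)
The plan is to reduce the statement to the modern double point formula, evaluated as a \emph{virtual} Chern class on the resolution $S$, and then to show that the rational double points contribute nothing to the count $2\de$. First I would normalize $X$ at its IDP's. Since rational double points are normal while an IDP consists of two transverse smooth branches, the normalization $\nu : X' \ra X$ is an isomorphism away from the IDP's, separates the two branches over each IDP, and leaves $X'$ with only rational double points; composing with $X \subset \BP^4$ gives $g : X' \ra \BP^4$. The degree and hyperplane class are unchanged, so $(H_{X'}^2) = d$. Being Du Val (hypersurface) singularities of embedding dimension $3$, the RDP's make $X'$ a local complete intersection in $\BP^4$, so $g$ is an immersion away from the finitely many RDP's. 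With $\mu : S \ra X'$ the minimal resolution and $f = g \circ \mu : S \ra \BP^4$, the map $f$ is generically an immersion, failing to be one exactly along the exceptional $(-2)$-configurations $E = \bigcup_i E_i$ over the RDP's, while $f$ is $2$-to-$1$ precisely over the $\de$ images of the IDP's.

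Next I would phrase the double point formula through the virtual normal class. From the generically exact sequence $0 \ra T_S \ra f^*(T_{\BP^4}) \ra N_f \ra 0$ one forms $c(f^* T_{\BP^4} - T_S) = c(f^* T_{\BP^4})\, c(T_S)^{-1}$, whose degree-$2$ part is well defined whether or not $f$ is an immersion. Using $c(f^* T_{\BP^4}) = (1 + H_S)^5$ with $H_S = \mu^* H$, and $c(T_S) = 1 - K_S + c_2(S)$, the computation gives
$$[c(f^* T_{\BP^4})\, c(T_S)^{-1}]_2 = 10 d + 5(H_S \cdot K_S) + (K_S^2) - c_2(S).$$
The theorem is then \emph{equivalent} to the virtual double point formula
$$d^2 = 2\de + [c(f^* T_{\BP^4})\, c(T_S)^{-1}]_2 ,$$
since substituting Noether's formula $c_2(S) = 12\chi(\hol_S) - (K_S^2)$ into this identity produces exactly the stated equality. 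In the immersion case (smooth $X'$, only IDP's) this is the classical formula, with an algebraic proof in the spirit of \cite{LMS75}; the whole content here is its validity \emph{with no correction term from} $E$.

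At this point I would exploit the special geometry of the resolution of RDP's. Crepancy gives $K_S = \mu^* K_{X'}$, hence $(K_S \cdot E_i) = 0$, while $H_S = \mu^* H$ gives $(H_S \cdot E_i) = 0$, so the $(-2)$-curves are invisible to both $H_S$ and $K_S$; moreover rationality of the singularities gives $\chi(\hol_S) = \chi(\hol_{X'})$, so the invariants entering the formula are genuinely those of the singular model. Granting that each RDP contributes $0$, one combines the virtual $c_2$ above with Noether's formula to conclude. The remaining—and decisive—task is therefore to prove that the excess of the double point class supported on $E$ vanishes for every Du Val configuration.

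I expect this last step to be the main obstacle, for two reasons. First, $f$ is not an immersion along $E$, so $N_f$ is not locally free there and the virtual $c_2$ can no longer be read off a transverse self-intersection; one must compute the local contribution of each $(-2)$-configuration and check it is zero for all types $A_n, D_n, E_n$. The only case directly covered by the Section 4 example is the node $A_1$, where the contribution $2d(d-2)$ of a multiplicity-$d$ singularity with smooth tangent cone vanishes at $d=2$; the remaining Du Val types, whose tangent cones are nonreduced, need a separate local analysis. Second, the argument must be characteristic-free, so one may invoke neither Theorem \ref{SG} (valid only in characteristic $\neq 2$) nor the Gauss-map/discrepancy-sheaf computation of Section 4; a direct elementary computation at each RDP—or, over $\CC$, a deformation argument smoothing only the RDP's while preserving $d$, $\chi(\hol_S)$, $(K_S^2)$ and $(H_S \cdot K_S)$—is what is required. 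Once the vanishing of the RDP contribution is established, the stated formula follows immediately.
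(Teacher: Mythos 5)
Your reduction is arithmetically correct: writing the virtual class $[c(f^*T_{\BP^4})\,c(T_S)^{-1}]_2 = 10d + 5(H_S\cdot K_S) + (K_S^2) - c_2(S)$ and substituting Noether's formula does turn the claimed identity into the ``virtual double point formula'' $d^2 = 2\de + [c(f^*T_{\BP^4})\,c(T_S)^{-1}]_2$ with no excess term along the exceptional locus $E$. But that is precisely where your argument stops being a proof: you explicitly write ``granting that each RDP contributes $0$'' and then identify the vanishing of the excess contribution along every Du Val configuration as ``the remaining---and decisive---task,'' without carrying it out. Since $f$ fails to be an immersion exactly along $E$, $N_f$ is not locally free there, and the classical formula of \cite{LMS75} does not apply as stated; the one local computation available in the paper (Section 4) covers only the multiplicity-$d$ singularity with smooth tangent cone, i.e.\ essentially $A_1$, as you yourself note. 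So the proposal is a correct reduction of the theorem to an unproven local statement, not a proof.

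The paper's own argument (Section 5) sidesteps this local analysis entirely and is worth contrasting with your plan. Instead of working with a virtual normal class on $S$, the authors blow up $\BP^4$ at the IDP's and then perform an embedded resolution of the rational double points by successive point blow-ups $g_i : V_i \to V_{i-1}$ of the ambient fourfold. They track the single quantity $\chi_i = (c_2(V_i).S_i) + (c_1(V_i)|_{S_i}.K_{S_i}) + 2(K_{S_i}^2) - (S_i.S_i)$ and prove it is invariant under each such blow-up (Lemma \ref{lem3}), using only that the resolution of an RDP is crepant and that the proper transform satisfies $S_i = g_i^*S_{i-1} - 2P_i$ because the surface meets the exceptional $\BP^3$ in a conic (Lemma \ref{lem1}), together with the blow-up formulas of Lemma \ref{lem0}. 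At the smooth end the normal bundle sequence and Noether's formula identify $\chi_{n-1}$ with $12\chi(\Oh_S)$; at the bottom, $\chi_0$ is computed directly in terms of $d$, $\de$, $(K_S^2)$ and $(H_S.K_S)$. This global bookkeeping replaces any case-by-case treatment of $A_n$, $D_n$, $E_n$ and is valid over any algebraically closed field in any characteristic, which is exactly the generality the theorem claims. If you want to complete your own route, you must either supply the local excess computation for all Du Val types (the paper declines to do this in Remark 4.6(a)) or adopt an invariance argument of this kind.
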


Theorem \ref{thm1} will be applied in the following natural situation. 

Let $S$ be a smooth minimal surface and let $f : S \to S'$ be either the morphism to the canonical model for  $S$ 
of general type,  or a  birational morphism  onto a normal surface $S'$ for $S$ with numerically trivial canonical class. 
Then $f$ is crepant and $S'$ has only rational double points as its singularties. As $S'$ is always embedded into some projective space $\BP^N$ ($N \ge 5$), we may assume that $S' \subset \BP^N$. Then, as the embedded dimension of any rational double point is $3$, it follows that any general linear projection $\pi : \BP^N \dasharrow \BP^4$, as explained above, induces a morphism 
$$\pi|_{S'} : S' \to X:= \pi(S') \subset \BP^4$$ 
such that $\pi|_{S'}$ is isomorphic at all rational double points of $S'$ and $X$ has only these rational double points and  finitely many improper double points. So, the morphism 
$$\pi|_{S'} \circ f : S \to X \subset \BP^4$$ 
satisfies the assumption made in Theorem \ref{thm1}. In fact, this gives, among other things, a generalization of a result of the first author \cite[Prop. 6.2, Cor. 6.3]{Ca97}
see also \cite[Theorem 0.2]{Ca16}, and  an affirmative answer to the conjecture made there. 

\begin{corollary}\label{cor1} Let $S$ be a minimal smooth surface of general type with $p_g(S) = 5$ such that the canonical map  
$$\Phi_{|K_S|} : S \to S'\subset \BP^4$$ is a morphism with
 image isomorphic to  the canonical model $S'$ of $S$. Then, setting  $d : = (K_S^2)_S$,
$$12 \chi({\mathcal O}_S) = (17-d)d\,\, .$$
In particular,  $S'$ is then a complete interesection of type $(2, 4)$ or $(3, 3)$, if either the base field is of characteristic $0$ or the characteristic of the field $k$ is odd and $h^1({\mathcal O}_S) = 0$. 
\end{corollary}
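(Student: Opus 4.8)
The plan is to specialize Theorem \ref{thm1} to the canonical situation and then read off the admissible values of $d$. In the setting of the corollary the morphism $\mu$ of Theorem \ref{thm1} is exactly the canonical morphism $\Phi_{|K_S|}\colon S \to S' \subset \BP^4$, whose image $X = S'$ is the canonical model. Hence there are no improper double points ($\delta = 0$), the only singularities of $X$ are the rational double points of $S'$ (so we are precisely in the hypotheses of Theorem \ref{thm1}), and the pulled-back hyperplane class is $H_S = \mu^* H = K_S$. In particular $(H_S \cdot K_S)_S = (K_S^2)_S = d$, where $d = (K_S^2)$. Substituting these identities into Theorem \ref{thm1}, the formula collapses to
$$ 0 = d^2 - 10 d + 12\chi(\hol_S) - 5 d - 2 d = d^2 - 17 d + 12\chi(\hol_S), $$
which is precisely $12\chi(\hol_S) = (17 - d)d$, the first assertion; note this already forces $(17-d)d > 0$, so $1 \le d \le 16$.

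Next I would extract the arithmetic constraints. Since $\chi(\hol_S) = 1 - h^1(\hol_S) + p_g(S) = 6 - q$ with $q := h^1(\hol_S) \ge 0$, the integer $12\chi(\hol_S) = (17-d)d$ is a positive multiple of $12$ that is at most $72$; equivalently $(17-d)d \in \{60, 72\}$, whose integer solutions are $d \in \{5, 8, 9, 12\}$ with $\chi = 5, 6, 6, 5$ respectively. Noether's inequality $K_S^2 \ge 2 p_g(S) - 4 = 6$ excludes $d = 5$, leaving $d \in \{8, 9, 12\}$, where $d = 8, 9$ correspond to the regular case $q = 0$ and $d = 12$ to $q = 1$.

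It then remains to pin down $d$ and identify $S'$. In the second stated case (char $k$ odd and $q = h^1(\hol_S) = 0$) one has $\chi(\hol_S) = 6$ outright, so $(17-d)d = 72$ and $d \in \{8, 9\}$. In characteristic $0$ one must instead exclude the irregular case $d = 12$, $q = 1$: here I would use that the canonical model $S'$, embedded by the complete system with $\hol_{S'}(1) = K_{S'}$, is arithmetically Cohen--Macaulay, so that the ideal-sheaf sequence together with $H^1(\hol_{\BP^4}(1)) = H^2(\hol_{\BP^4}(1)) = 0$ gives $H^2(\mathcal I_{S'}(1)) \cong H^1(K_{S'})$, a space of dimension $q$; Cohen--Macaulayness forces its vanishing, hence $q = 0$ and $d \in \{8,9\}$ (alternatively one excludes an irregular canonical embedding directly via the Albanese fibration and holomorphic one-forms). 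Finally, with $d$ determined, the proof proceeds exactly as in the smooth case of Theorem \ref{SantaCruz}: for $d = 8$ one shows $S'$ lies on a unique quadric and is a complete intersection of type $(2,4)$, and for $d = 9$ that $S'$ is cut out by two cubics, i.e.\ is of type $(3,3)$.

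I expect the main obstacle to be the characteristic-$0$ exclusion of the irregular case $d = 12$ (equivalently, proving $q = 0$). The numerical invariants alone are all consistent with $(K_S^2, \chi, q) = (12, 5, 1)$: Noether, Bogomolov--Miyaoka--Yau, and the Hilbert-function counts of the quadrics and cubics through $S'$ impose no contradiction. The exclusion therefore genuinely requires a structural input -- either a projective-normality/arithmetic-Cohen--Macaulay statement for canonically embedded surfaces of general type, or a geometric argument on the Albanese map -- rather than a purely numerical estimate; once regularity is secured, the complete-intersection conclusion follows by the established method of Theorem \ref{SantaCruz}.
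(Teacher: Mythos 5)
Your derivation of the numerical relation $12\chi(\hol_S)=(17-d)d$ is correct and coincides exactly with the paper's argument: put $\delta=0$ and $H_S=K_S$ into Theorem \ref{thm1} and simplify. Your subsequent arithmetic ($\chi(\hol_S)=6-h^1(\hol_S)$, hence $(17-d)d\in\{60,72\}$ and $d\in\{5,8,9,12\}$, with $d=5$ excluded by Noether's inequality, and the odd-characteristic case settled because the hypothesis $h^1(\hol_S)=0$ gives $\chi(\hol_S)=6$ outright) is sound and in fact more explicit than the paper, which for the second assertion simply invokes the proofs of \cite[Theorem 0.2]{Ca16} and \cite{Ca97}, p.~41.

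The genuine gap is your characteristic-$0$ exclusion of the case $(d,q)=(12,1)$. The homological identification $H^2(\sI_{S'}(1))\cong H^1(K_{S'})\cong H^1(\hol_{S'})^{\vee}$ is fine, but the input you feed into it --- that the canonically embedded canonical model $S'\subset\BP^4$ is arithmetically Cohen--Macaulay --- cannot be assumed: a codimension-two, subcanonical, locally complete intersection ACM subscheme of $\BP^4$ is already a complete intersection (Serre correspondence plus Horrocks splitting of the associated rank-two bundle with no intermediate cohomology), so postulating ACM is postulating the entire conclusion of the corollary; projective normality of canonical images is not automatic and is essentially what must be proved here. Your fallback (``a geometric argument on the Albanese map'') is not carried out, so the regularity step remains open in your write-up. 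The paper does not reprove this step either, but it explicitly delegates it, together with the identification of $S'$ as a $(2,4)$ or $(3,3)$ complete intersection once $d\in\{8,9\}$, to the proof of \cite[Theorem 0.2]{Ca16}. A smaller omission: in odd positive characteristic the quadric/cubic count of \cite{Ca97} needs $h^1(mK_S)=0$ for $m\ge 2$, which is not Kodaira vanishing but Ekedahl's theorem \cite[Theorem 1.7]{Ek88}; your ``proceeds exactly as in the smooth case'' silently relies on this.
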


\begin{proof} We have $\delta = 0$, $(H_S^2)_S = (K_S^2)_S = d$ by our assumption. So, the first equality follows from Theorem \ref{thm1}. The last statement then follows from the proof of \cite[Theorem 0.2]{Ca16}. Note that, in positive odd characteristic case, we have 
$h^{1}(mK_S) = 0$ for all $m \ge 1$ by our assumption (for $m=1$) and \cite[Theorem 1.7]{Ek88}  (for $m \ge 2$); hence 
$\chi({\mathcal O}_S) = 6$ by our assumption, and the rest of the proof follows as in \cite{Ca97}, page 41.  

\end{proof}
\begin{problem}
Are there exceptions to the above statement that $S'$ is a complete intersection in odd positive characteristic for $H^1(\hol_S) \neq0$, or in characteristic $p=2$?
\end{problem}

The above  corollary was actually an  initial  motivation for  our study.

The following fact will be used frequently in the proof of Theorem \ref{thm1}.

\begin{lemma}\label{lem0} Let $\pi : V \to W$ be the blow up at $Q \in W$ of a smooth projective variety $W$ of dimension $4$, 
let $E = \pi^{-1}(Q) \simeq \BP^3$ be  the exceptional divisor and let $P \simeq \BP^2$ be a plane in $E \simeq \BP^3$. 
Then, in the Chow ring of $V$, we have:
$$c_1(V) = \pi^*c_1(W) -3E\,\, ,\,\, c_2(V) = \pi^{*}c_2(W) - 2P\,\, ,\,\, (P.P)_V = -1\,\, .$$ 
\end{lemma}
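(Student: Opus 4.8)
The plan is to establish the three assertions in turn, working throughout in the Chow ring of $V$ and using only standard blow-up geometry, so that the argument remains valid over any algebraically closed field.

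First, the formula for $c_1$. Since $V$ is the blow-up of the smooth $4$-fold $W$ along the smooth center $\{Q\}$, of codimension $4$, the canonical bundle formula gives $K_V = \pi^* K_W + 3E$, the coefficient being the codimension minus one. Taking $c_1 = -K$ yields $c_1(V) = \pi^* c_1(W) - 3E$ at once.

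Next, the self-intersection $(P.P)_V$. As $P \cong \BP^2$ has codimension $2$ in $V$, the self-intersection formula for a regular embedding gives $(P.P)_V = \int_P c_2(N_{P/V})$. I would compute $N_{P/V}$ from the two normal bundle sequences: $N_{P/E} = \hol_P(1)$, since $P$ is a hyperplane in $E \cong \BP^3$, and $N_{E/V}|_P = \hol_E(-1)|_P = \hol_P(-1)$, since $N_{E/V} = \hol_E(-1)$. Writing $\ell$ for the hyperplane class of $P$, the sequence $0 \to N_{P/E} \to N_{P/V} \to N_{E/V}|_P \to 0$ gives $c_2(N_{P/V}) = \ell \cdot (-\ell) = -\ell^2$, whence $(P.P)_V = \int_P(-\ell^2) = -1$.

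Finally, and this is the crux, the formula for $c_2$. Because $\pi$ is an isomorphism over $W \setminus \{Q\}$, the bundles $T_V$ and $\pi^* T_W$ agree there, so the class $c_2(V) - \pi^* c_2(W)$ restricts to $0$ on $V \setminus E$. By the localization exact sequence for Chow groups this difference lies in the image of $A_2(E) \to A_2(V)$; since $A_2(E) = A_2(\BP^3) = \Z \cdot [P]$, we obtain $c_2(V) - \pi^* c_2(W) = m[P]$ for a single integer $m$, independent of the chosen plane. To pin down $m$, I intersect both sides with $P$. On one hand $\pi^* c_2(W) \cdot P = 0$, because $\pi|_E$ factors through the point $Q$, so the restriction to $E$ (hence to $P$) of any positive-degree pull-back class vanishes. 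On the other hand $c_2(V) \cdot P = \int_P c_2(T_V|_P)$, which I compute from $0 \to T_P \to T_V|_P \to N_{P/V} \to 0$: using $c_2(T_P) = 3\ell^2$, $c_1(T_P) = 3\ell$, and the already-found $c(N_{P/V}) = 1 + 0 - \ell^2$, one gets $c_2(T_V|_P) = 3\ell^2 - \ell^2 = 2\ell^2$, so $c_2(V) \cdot P = 2$. Combining with $(P.P)_V = -1$ gives $2 - 0 = m \cdot (-1)$, that is $m = -2$, which is exactly $c_2(V) = \pi^* c_2(W) - 2P$.

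The step I expect to demand the most care is the localization argument isolating $c_2(V) - \pi^* c_2(W)$ as an integer multiple of $[P]$: one must verify that this difference really comes from a cycle supported on $E$ and that $A_2(E)$ is one-dimensional, so that a single numerical intersection determines the coefficient. Everything else is bookkeeping with the two normal bundle sequences. (An alternative would be to exhibit the tangent sheaf sequence $0 \to T_V \to \pi^* T_W \to j_* Q_E \to 0$, with $Q_E$ the quotient bundle on $E$, and compute $c(j_* Q_E)$ by Grothendieck--Riemann--Roch in the Chow ring; this reproduces the same $m = -2$, but the elementary route above avoids that machinery.)
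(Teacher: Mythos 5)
Your proof is correct and follows essentially the same route as the paper: both write $c_2(V)-\pi^{*}c_2(W)$ as an integer multiple of $[P]$ (justified by the structure of $A^2(V)$) and determine the coefficient by intersecting with $P$ through the normal bundle sequence $0 \to T_P \to T_V|_P \to N_{P/V} \to 0$. The only differences are cosmetic: you compute $c(N_{P/V})$ directly from the two sequences for $P \subset E \subset V$ and then get $(P.P)_V=-1$ from the self-intersection formula, whereas the paper obtains $(P.P)_V=(E^4)_V=-1$ first, deduces $c_1(N_{P/V})=0$ from its already-proved formula for $c_1(V)$ together with $E|_P=-\ell$, and reads off $c_2(N_{P/V})=(P^2)_V=-1$.
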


\begin{proof} As $\pi$ is the blow up at a smooth point of a $4$-fold, we have that the Chow ring $A(V)$ of $V$ is generated by the pull back of the Chow ring of $W$
and by the exceptional divisor.

We have moreover
$E^2 = E|_E = -P$. Thus 
$$(P.P)_V = (E^4)_V = ((E|_E)^3)_E = ((-P)^3)_E = -1\,\, .$$
This proves the last formula. As $c_1(V) = -K_V$ and $c_1(W) = -K_W$, the first formula is nothing but the canonical bundle formula under the blow up. 

Let us show the second formula. We have $A^2(V) = \pi^*A^2(W) \oplus \Z [P]$ and,
evaluating on two dimensional cycles avoiding the exceptional divisor, we infer that  we can write  $c_2(V) = \pi^{*}c_2(W) + aP\,\, .$ In order to determine $a$, consider the exact sequence
$$0 \to T_P \to T_V|_{P} \to N_{V/P} \to 0\,\, .$$
By  functoriality of the Chern class, we have
$${\rm (A)}\,\, (c_2(V).P)_V = (c_1(P).c_1(N_{V/P}))_{P} + c_2(P) + c_2(N_{V/P})\,\, .$$
The left hand side is then
$$(c_2(V).P)_V = (\pi^{*}c_2(W) + aP.P) = a(P.P) = -a\,\, .$$
We compute the right hand side. Note that $c_1(P) = -K_P = 3l$ in the Chow ring, where $l$ is a line in $P \simeq \BP^2$. We have 
$${\rm (B)}\,\, c_1(V)|_{P} = c_1(P) + c_1(N_{V/P}) = 3l + c_1(N_{V/P})$$
from the exact sequence above. 
Note that, as $l, P \subset E$, we have 
$$(E|_P.l)_P = (E|_E.l)_E = (-P.l)_E = -1\,\, .$$ 
In particular, $E|_P = -l$ in the Chow ring of $P$. 
Combining this with the first equality, we have
$$c_1(V)|_{P} = (\pi^*c_1(W) - 3E)|_{P} = -3E|_P = 3l\,\, .$$
Hence  $c_1(N_{V/P}) = 0$ by (B). 
Since $P \subset V$ is a smooth subvariety of codimension $2 = 4/2$, by the selfintersection formula and the last formula $(P^2)_V = -1$, we have $c_2(N_{V/P}) = (P^2)_V = -1\,\, .$
As $P \simeq \BP^2$, we have $c_2(P) = 3$. 
Substituting everything into  formula (A), we obtain  $-a = 0 + 3 -1$, hence $a = -2$ as claimed.
\end{proof}

The rest of this section is devoted to prove Theorem \ref{thm1}. 

\begin{proof} Let $x_j$, for $1 \le j \le \delta$,  be the  IDP's  of $X$. Consider the blow up 
$$f : V_1 \to \BP^4$$
of $\BP^4$ at the points $x_j$,  $1 \le j \le \delta$.
  
Let $S_0$ be the proper transform of $X$ and  $f|_{S_0} : S_0 
\to X$ be the induced morphism. Then $f|_{S_0}$ is an isomorphism  except over the points $x_j$,   $S_0$ is smooth over the points $x_j$ and $f|_{S_0}^{-1}(x_j)$ consists of two $(-1)$-curves, i.e., two smooth rational curves of self-intersection number $(-1)$. We denote these two curves by $l_{j1}$ and $l_{j2}$. 

Next, we are going to  take the minimal embedded resolution of $S_0 \subset V_0$ via a sequence of point blow-ups. 
Let 
$$g_1 : V_1 \to V_0$$
be the blow up of $V_0$ at a rational double point, say $P$, of $S_0$ and define $S_1$ to be the proper transform of $S_0$ under $g_1$. Then we have an induced morphism  $g_1|_{S_1} : S_1 \to S_0$ which is the same as the blow up of $S_0$ at the maximal ideal of $P$. The special properties of  rational double points that  we need is that the canonical divisor $K_{S_0}$ is Cartier, $S_1$ continues to have only rational double points as singularities, and the morphism $g_1|_{S_1}$ is crepant, i.e., $(g_1|_{S_1})^*K_{S_0} = K_{S_1}$ (see \cite{Ar66}). Now we choose an embedded resolution of $S_0 \subset V_0$ inductively by
$$g_i : V_{i} \to V_{i-1}\,\, ,\,\, g_i|_{S_{i}} : S_{i} \to S_{i-1}$$
($i=1, 2, \ldots, n$), where $g_i$ is the blow up  of $S_{i-1}$ at some (singular) rational double point, $S_{i}$ is the proper transform of $S_{i-1}$, and $S_{n}$ is smooth. 

We set
$$\tilde{S} := S_n\,\, , \,\, g|_{\tilde{S}} : \tilde{S} \to S_0\,\, ,$$
where $g|_{\tilde{S}}$ is the morphism induced by $g : V_n \to V_0$, the composition of the $g_i$' s. We also denote respective  proper transforms of $l_{j1}$, $l_{j2}$ on $\tilde{S}$  by the same letter. This is harmless for us, as they are disjoint from the exceptional divisors of $g$. 

Clearly $\tilde{S}$ is birational to the original $S$ in our Theorem \ref{thm1}. Recall that we started from the birational morphism $\mu : S \to X \subset \BP^4$. Indeed, by our assumption, the birational map  $\nu := \mu \circ f|_{S_0} \circ g|_{\tilde{S}} : \tilde{S} \to S$ is the contraction morphism of the $2\delta$ (-1)-curves $l_{j1}$, $l_{j2}$.
We denote the exceptional divisor of $g_i : V_i \to V_{i-1}$ by $E_i$ 
and let $P_i \simeq \BP^2$ be a plane of $E_i \simeq \BP^3$. 

\begin{lemma}\label{lem1} Under the  above setting, 
$$S_i = g_i^*S_{i-1} -2P_{i}\,\, ,\,\, (S_i.P_i)_{V_i} = 2$$
in the Chow ring of $V_i$. 
\end{lemma}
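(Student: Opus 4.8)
The plan is to work in $A^2(V_i)$ and first reduce the cycle identity to the determination of a single integer. Since $g_i$ is the blow-up of the smooth $4$-fold $V_{i-1}$ at the point $P$, Lemma \ref{lem0} gives $A^2(V_i)=g_i^*A^2(V_{i-1})\oplus\Z[P_i]$, together with $E_i^2=-P_i$ and $(P_i\cdot P_i)_{V_i}=-1$. The proper transform $S_i$ and the total transform $g_i^*S_{i-1}$ restrict to the same cycle on $V_i\setminus E_i\cong V_{i-1}\setminus\{P\}$, so their difference is supported on $E_i\cong\BP^3$ and is therefore a multiple of $[P_i]$. Thus I can write
$$[S_i]=g_i^*[S_{i-1}]+a\,[P_i]$$
in $A^2(V_i)$ for a single integer $a$, and the first formula amounts to proving $a=-2$.

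Next I would compute $(S_i\cdot P_i)_{V_i}$ directly, using that the singular point $P$ is a rational double point, hence of multiplicity $2$ on $S_{i-1}$. By the special property recalled in the set-up, $g_i|_{S_i}\colon S_i\to S_{i-1}$ is the blow-up of $S_{i-1}$ at the maximal ideal of $P$; consequently the restriction $E_i|_{S_i}$ is exactly the exceptional Cartier divisor $Z$ of this blow-up. Since $Z\cong\BP(C_PS_{i-1})$ carries $\mathcal O_{S_i}(Z)|_Z=\mathcal O_Z(-1)$, its self-intersection equals minus the degree of the tangent cone, i.e. minus the multiplicity: $(Z^2)_{S_i}=-2$. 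Using $P_i=-E_i^2$ from Lemma \ref{lem0} and the inclusion $\iota\colon S_i\hookrightarrow V_i$, I get
$$(S_i\cdot P_i)_{V_i}=-(S_i\cdot E_i^2)_{V_i}=-\big((E_i|_{S_i})^2\big)_{S_i}=-(Z^2)_{S_i}=2,$$
which is the second assertion.

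Finally I would feed this into the displayed relation to determine $a$. Intersecting $[S_i]=g_i^*[S_{i-1}]+a[P_i]$ with $P_i$ and applying the projection formula gives $(g_i^*[S_{i-1}]\cdot P_i)_{V_i}=([S_{i-1}]\cdot g_{i*}[P_i])_{V_{i-1}}=0$, because $P_i$ is contracted to the point $P$, so that $g_{i*}[P_i]=0$. Together with $(P_i\cdot P_i)_{V_i}=-1$ this yields $2=(S_i\cdot P_i)_{V_i}=a\cdot(-1)$, hence $a=-2$ and $S_i=g_i^*S_{i-1}-2P_i$, proving both formulas. The step I expect to be the real content is the identification $(Z^2)_{S_i}=-2$: everything hinges on the multiplicity of a rational double point being exactly $2$ and on $g_i|_{S_i}$ being the blow-up of the maximal ideal, so that the exceptional self-intersection computes that multiplicity uniformly across all $A$--$D$--$E$ types; the rest is formal bookkeeping with Lemma \ref{lem0} and the projection formula.
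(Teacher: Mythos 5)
Your proof is correct and follows essentially the same route as the paper: both decompose $A^2(V_i)=g_i^*A^2(V_{i-1})\oplus\Z[P_i]$, write $S_i=g_i^*S_{i-1}+aP_i$, and pin down $a=-2$ by intersecting with $P_i$ and using $(P_i\cdot P_i)_{V_i}=-1$. The only variation is in how you obtain $(S_i\cdot P_i)_{V_i}=2$: the paper notes that $S_i|_{E_i}$ is a conic in $E_i\simeq\BP^3$ and intersects it with the plane $P_i$, whereas you use $P_i=-E_i^2$ and the self-intersection $(Z^2)_{S_i}=-2$ of the exceptional divisor of the blow-up of the surface at the double point --- two equivalent manifestations of the fact that a rational double point has multiplicity $2$.
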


\begin{proof} Since $A^2(V_i) = g_i^*A^2(V_{i-1}) \oplus \Z P_i$, one can write  $S_i = g_i^*S_{i-1} + aP_i$
for some integer $a$. Then 
$(S_i.P_i)_{V_i} = a(P_i.P_i)_{V_i} = -a$ by the formula above and Lemma \ref{lem0}. On the other hand, as $g_i$ is the blow up at a rational double point of $S_i$, it follows that $S_i|_{E_i}$ is a plane conic curve in $E_i \simeq \BP^3$, while $P_i$ is a plane in $E_i \simeq \BP^3$. Thus
$$(S_i.P_i)_{V_i} = (S_i|_{E_i}.P_i)_{E_i} = 2\,\, .$$
Therefore $a = -2$ as claimed. \end{proof}
We set 
$$\chi_i := (c_2(V_i).S_i)_{V_i} + (c_1(V_i)|_{S_i}.K_{S_i})_{S_i} + 2(K_{S_i}^2)_{S_i} - (S_i.S_i)_{V_i}$$
for all integers $i$ with  $0 \le i \le n-1$. 

\begin{lemma}\label{lem2} In the above setting,  $12 \chi({\mathcal O}_S) = \chi_{n-1}$.
\end{lemma}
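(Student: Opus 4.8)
The plan is to extend the definition of $\chi_i$ to $i=n$ by the very same formula, to identify $\chi_n$ with $12\chi(\mathcal{O}_{\tilde S})$ on the smooth surface $\tilde S = S_n$, and then to show that $\chi_i$ does not change under any of the blow-ups $g_i$; together with the birational invariance $\chi(\mathcal{O}_{\tilde S})=\chi(\mathcal{O}_S)$ this will give $\chi_{n-1}=12\chi(\mathcal{O}_S)$.

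For the base case I would work on the smooth surface $S_n\subset V_n$ and use the normal bundle sequence $0\to T_{S_n}\to T_{V_n}|_{S_n}\to N\to 0$. Multiplicativity of the total Chern class, together with $c_2(N)=(S_n.S_n)_{V_n}$ (selfintersection formula) and $c_1(N)=c_1(V_n)|_{S_n}+K_{S_n}$, yields after evaluation on $S_n$
$$ e(S_n)=(c_2(V_n).S_n)_{V_n}+(c_1(V_n)|_{S_n}.K_{S_n})_{S_n}+(K_{S_n}^2)_{S_n}-(S_n.S_n)_{V_n}. $$
Adding $(K_{S_n}^2)_{S_n}$ and invoking Noether's formula $12\chi(\mathcal{O}_{S_n})=(K_{S_n}^2)_{S_n}+e(S_n)$ gives $\chi_n=12\chi(\mathcal{O}_{S_n})$. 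Since $\nu:\tilde S\to S$ contracts the $2\delta$ $(-1)$-curves $l_{j1},l_{j2}$, the surface $\tilde S$ is $S$ blown up at $2\delta$ points, so $\chi(\mathcal{O}_{\tilde S})=\chi(\mathcal{O}_S)$ and therefore $\chi_n=12\chi(\mathcal{O}_S)$.

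For the inductive step I would prove $\chi_i=\chi_{i-1}$ for every $i$, and then specialise to $i=n$. Expanding the four terms of $\chi_i$ across $g_i$ by means of Lemma \ref{lem0} and Lemma \ref{lem1}, and using the projection formula (any class supported on $P_i\subset E_i$ pushes forward to $0$, while $(P_i.P_i)_{V_i}=-1$), I find $(c_2(V_i).S_i)_{V_i}=(c_2(V_{i-1}).S_{i-1})_{V_{i-1}}-4$ and $(S_i.S_i)_{V_i}=(S_{i-1}.S_{i-1})_{V_{i-1}}-4$, so the two corrections enter $\chi_i$ with opposite signs and cancel. For the remaining two terms I would use crepancy $K_{S_i}=(g_i|_{S_i})^*K_{S_{i-1}}$: this gives $(K_{S_i}^2)_{S_i}=(K_{S_{i-1}}^2)_{S_{i-1}}$ immediately, and in the mixed term the exceptional part $-3E_i|_{S_i}$ of $c_1(V_i)|_{S_i}$ is paired with the pullback $K_{S_i}$ along the exceptional conic, which is contracted by $g_i|_{S_i}$, so it contributes $0$ and $(c_1(V_i)|_{S_i}.K_{S_i})_{S_i}=(c_1(V_{i-1})|_{S_{i-1}}.K_{S_{i-1}})_{S_{i-1}}$. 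Hence $\chi_i=\chi_{i-1}$, and in particular $\chi_{n-1}=\chi_n=12\chi(\mathcal{O}_S)$.

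I expect the delicate point to be the mixed term $(c_1(V_i)|_{S_i}.K_{S_i})_{S_i}$ on the singular surfaces $S_i$: its invariance hinges on crepancy, which forces $K_{S_i}$ to be the pure pullback of $K_{S_{i-1}}$ with no exceptional summand, so that the $-3E_i$ correction meets a pullback and dies by the projection formula. This is precisely where the hypothesis that $g_i$ blows up a rational double point (rather than an arbitrary point) is used. One must also take care that every intersection number on the singular $S_i$ is one of Cartier divisors, which holds since $K_{S_i}$ is Cartier and the restriction of a line bundle from the smooth ambient $V_i$ is Cartier, so that the projection formula is legitimate; the numerical cancellation $-4+4=0$ of the $c_2$ and selfintersection corrections is then immediate from Lemmas \ref{lem0} and \ref{lem1}.
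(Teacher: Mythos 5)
Your proposal is correct, and it uses exactly the same ingredients as the paper (Lemma \ref{lem0}, Lemma \ref{lem1}, the normal bundle and selfintersection formulae, Noether's formula, and crepancy of $g_i|_{S_i}$), but it decomposes the argument differently. The paper proves Lemma \ref{lem2} in one fused computation: it writes the normal bundle sequence for the smooth pair $S_n \subset V_n$, expands both sides of $c_2(V_n)|_{S_n} = c_1(N)c_1(S_n) + c_2(S_n) + c_2(N)$ across the last blow-up $g_n$, inserts Noether's formula for $c_2(S_n)$, and lands directly on $\chi_{n-1} = 12\chi(\mathcal{O}_S)$; in doing so it essentially repeats, inside this proof, the descent computation that reappears as Lemma \ref{lem3}. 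You instead extend the definition of $\chi_i$ to $i=n$, identify $\chi_n = 12\chi(\mathcal{O}_{S_n})$ as a pure statement of Noether's formula on the smooth surface, and then observe that the Lemma \ref{lem3} descent argument applies verbatim one more step, from $i=n$ to $i=n-1$ (all it needs is $S_n = g_n^*S_{n-1}-2P_n$, the projection formula for classes supported on $P_n$, and $K_{S_n}=(g_n|_{S_n})^*K_{S_{n-1}}$, which hold for $i=n$ just as for $i\le n-1$). This factorization is cleaner and makes the logical structure more transparent, at no cost in generality; the only cosmetic caveat is that over a field of positive characteristic one should read your $e(S_n)$ as the degree of $c_2(T_{S_n})$ rather than a topological Euler number, which is exactly how Noether's formula is used in the paper.
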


\begin{proof} Since $S_n \subset V_n$ are both smooth, we have the  exact sequence
$$0 \to T_{S_n} \to T_{V_n}|_{S_n} \to N_{V_n/S_n} \to 0\,\, {\color{red} ,}$$
 and we obtain
$${\rm (A):}\,\,\,\, c_1(V_n)|_{S_n} = c_1(S_n) + c_1(N_{V_n/S_n}) = -K_{S_{n}} + c_1(N_{V_n/S_n})\,\, .$$

By Lemma \ref{lem0}, we compute the left hand side of (A) as
$$c_1(V_n)|_{S_{n}} = (g_n|_{S_n})^*(c_1(V_{n-1})|_{S_{n-1}}) -3R_n\,\, ,$$
where $R_n := E_n|_{S_n}$.  Substituting this into (A), we obtain
$${\rm (B):}\,\,\,\, c_1(N_{V_n/S_n}) = (g_n|_{S_n})^*(c_1(V_{n-1})|_{S_{n-1}}) -3R_n + K_{S_{n}}\,\, .$$
Again from the normal bundle sequence, we obtain
$${\rm (C):}\,\,\,\, c_2(V_n)|_{S_n} = c_1(N_{V_n/S_n})c_1(S_n) + c_2(S_n) + c_2(N_{V_n/S_n})\,\, .$$
Using Lemma \ref{lem0}, we compute the left hand side of (C) as
$$c_2(V_n)|_{S_n} = (g_n^*c_2(V_{n-1}).S_n)_{V_n} - 2(P_n.S_n)_{V_n} = (c_2(V_{n-1}).S_{n-1})_{V_{n-1}} -4\,\, .$$

We compute now each term of the right hand side of (C). 
Since $g_n|_{S_n}$ is crepant, we have $(R_n.K_{S_n})_{S_n} = 0$. Thus, by using (B), we compute
$$(c_1(N_{V_n/S_n}).c_1(S_n))_{S_n} = ((g_n|_{S_{n}})^{*}(c_1(V_{n-1})|_{S_{n-1}}). (-K_{S_n}))_{S_n} - (K_{S_{n}}^2)_{S_{n}} = $$
$$= (c_{1}(V_{n-1})|_{S_{n-1}}.-K_{S_{n-1}})_{S_{n-1}} - (K_{S_{n-1}}^2)_{S_{n-1}}\,\, .$$
Using Noether's formula for the smooth projective surface $S_n$, the fact that $\chi({\mathcal O}_S)$ is invariant under birational smooth modification, 
and the fact that $g_n|_{S_n}$ is crepant, we compute
$$c_2(S_n) = 12\chi({\mathcal O}_{S_n}) - (K_{S_n}^2)_{S_n} = 12\chi({\mathcal O}_{S}) - (K_{S_{n-1}}^2)_{S_{n-1}}\,\, .$$
By using again the selfintersection  formula $c_2(N_{V_n/S_n}) = (S_n^2)_{V_n}$ and the formula $S_n = g_n^*S_{n-1} - 2P_n$ (Lemma \ref{lem1}), we have
$$c_2(N_{V_n/S_n}) = (S_n.S_n)_{V_n} = (S_{n-1}.S_{n-1})_{V_{n-1}} -4\,\, .$$
Substituting all into  equation (C), we obtain that 
$$(c_2(V_{n-1}).S_{n-1})_{V_{n-1}} -4 = (c_{1}(V_{n-1})|_{S_{n-1}}.-K_{S_{n-1}})_{S_{n-1}} - 2(K_{S_{n-1}}^2)_{S_{n-1}}$$ 
$$+ (S_{n-1}.S_{n-1})_{V_{n-1}} -4 + 12 \chi({\mathcal O}_S)\,\, .$$
By the definition of $\chi_i$, this equality is equivalent to  $12 \chi({\mathcal O}_S) = \chi_{n-1}$. This completes the proof of Lemma \ref{lem2}. 

\end{proof}

\begin{lemma}\label{lem3} The number  
$\chi_i$ is independent of  $i$, for all $0 \le i \le n-1$. 
\end{lemma}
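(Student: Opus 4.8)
The claim is that the quantity
$$
\chi_i = (c_2(V_i).S_i)_{V_i} + (c_1(V_i)|_{S_i}.K_{S_i})_{S_i} + 2(K_{S_i}^2)_{S_i} - (S_i.S_i)_{V_i}
$$
does not depend on $i$ in the range $0 \le i \le n-1$. The plan is to show that $\chi_i = \chi_{i-1}$ for each $i$, by tracking the effect of the single blow-up $g_i : V_i \to V_{i-1}$ on each of the four terms. This is exactly the computation already carried out inside the proof of Lemma \ref{lem2}, but where there the top step $i=n$ was used to bring in Noether's formula; here the step is purely a bookkeeping identity between consecutive indices, so no appeal to smoothness of $S_n$ is needed. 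The key inputs are Lemma \ref{lem0} (the behaviour of $c_1, c_2$ under a point blow-up of the ambient $4$-fold, and $(P_i.P_i) = -1$), Lemma \ref{lem1} (the relations $S_i = g_i^*S_{i-1} - 2P_i$ and $(S_i.P_i)_{V_i} = 2$), and the crepancy of $g_i|_{S_i}$, i.e. $(g_i|_{S_i})^*K_{S_{i-1}} = K_{S_i}$, together with the projection formula.

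I would compute the four terms one at a time. For the first term, $c_2(V_i) = g_i^*c_2(V_{i-1}) - 2P_i$ and $S_i = g_i^*S_{i-1} - 2P_i$ give, via the projection formula and $(g_i^*(\cdot).P_i) = 0$ for cycles pulled back from below,
$$
(c_2(V_i).S_i)_{V_i} = (c_2(V_{i-1}).S_{i-1})_{V_{i-1}} - 4\,.
$$
For the fourth term the same substitution yields
$$
(S_i.S_i)_{V_i} = (S_{i-1}.S_{i-1})_{V_{i-1}} - 4\,,
$$
so these two shifts cancel in $\chi_i$. For the middle two terms, set $R_i := E_i|_{S_i}$; crepancy gives $(R_i.K_{S_i})_{S_i} = 0$ and $(K_{S_i}^2)_{S_i} = (K_{S_{i-1}}^2)_{S_{i-1}}$ (since $K_{S_i} = (g_i|_{S_i})^*K_{S_{i-1}}$), so the term $2(K_{S_i}^2)_{S_i}$ is unchanged. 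Finally, using $c_1(V_i)|_{S_i} = (g_i|_{S_i})^*(c_1(V_{i-1})|_{S_{i-1}}) - 3R_i$ from Lemma \ref{lem0} together with $K_{S_i} = (g_i|_{S_i})^*K_{S_{i-1}}$, the projection formula and $(R_i.K_{S_i})_{S_i}=0$ give
$$
(c_1(V_i)|_{S_i}.K_{S_i})_{S_i} = (c_1(V_{i-1})|_{S_{i-1}}.K_{S_{i-1}})_{S_{i-1}}\,.
$$
Assembling the four terms, the $-4$ from the $c_2$ term is cancelled by the $-4$ from the self-intersection term (note the overall minus sign on the last summand), and the two middle terms are individually invariant; hence $\chi_i = \chi_{i-1}$.

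I do not expect a genuine obstacle here, since every needed relation is already established in Lemmas \ref{lem0} and \ref{lem1} and in the displayed computation within Lemma \ref{lem2}. The one point that requires care is bookkeeping the sign of each shift and confirming that the $-4$'s cancel rather than add: the first term contributes $-4$ and the fourth term contributes $-(-4) = +4$ after the subtraction in the definition of $\chi_i$, so they cancel exactly. A second point to state cleanly is the vanishing $(g_i^*\alpha.P_i)_{V_i} = 0$ for any cycle $\alpha$ pulled back from $V_{i-1}$, which underlies every projection-formula step; this follows because $P_i$ is contracted by $g_i$ and has positive-dimensional image a point. With these in hand the equality $\chi_i = \chi_{i-1}$ for each $i$ with $1 \le i \le n-1$ is immediate, and the lemma follows by induction.
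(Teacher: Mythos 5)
Your proposal is correct and follows essentially the same route as the paper: both compute the change of each of the four terms of $\chi_i$ under the single blow-up $g_i$ using Lemma \ref{lem0}, Lemma \ref{lem1}, crepancy of $g_i|_{S_i}$, and the projection formula, and observe that the two $-4$ shifts cancel while the middle terms are invariant. The only blemish is the garbled phrase about $P_i$ having ``positive-dimensional image a point''; the intended (and correct) justification is that $g_{i*}P_i=0$ because $P_i$ is contracted to a point.
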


\begin{proof} Let $i \ge 1$. Then by Lemma \ref{lem0} and Lemma \ref{lem1}, we have
$$(c_2(V_i).S_i)_{V_i} = ((g_i^*c_2(V_{i-1}) - 2P_i).(g_i^*S_{i-1} - 2P_i))_{V_i} 
= (c_2(V_{i-1}).S_{i-1})_{V_{i-1}} - 4\,\, ,$$
$$(S_i.S_i)_{V_i} = ((g_i^*S_{i-1} - 2P_i).(g_i^*S_{i-1} - 2P_i))_{S_i} = (S_{i-1}.S_{i-1})_{V_{i-1}} -4\,\, .$$ 
Using Lemma \ref{lem0} and observing that $K_{S_i} = (g_{i}|_{S_i})^*K_{S_{i-1}}$, we have 
$$(c_1(V_i)|_{S_i}.K_{S_i})_{S_i} = ((g_i^*c_1(V_{i-1}) - 2P_i)|_{S_i}.(g_{i}|_{S_i})^*K_{S_{i-1}}) = (c_1(V_{i-1})|_{S_{i-1}}.K_{S_{i-1}})_{S_{i-1}}\,\, ,$$
and  $(K_{S_i}^2)_{S_i} = (K_{S_{i-1}}^2)_{S_{i-1}}$.
Substituting all these equalities  into the formula in Lemma \ref{lem3}, we obtain that  $\chi_i = \chi_{i-1}$. This implies the result.
 
\end{proof}

\begin{lemma}\label{lem4} In the  above setting, 
$$12 \chi({\mathcal O}_S) = (c_2(V_0).S_0)_{V_0} + (c_1(V_0)|_{S_0}.K_{S_0})_{S_0} + 2(K_{S_0}^2)_{S_0} - (S_0.S_0)_{V_0}\,\, .$$
\end{lemma}

\begin{proof} By Lemma \ref{lem2} and Lemma \ref{lem1}, we have  $12 \chi({\mathcal O}_S) = \chi_{n-1} = \chi_{0}$. The right  hand side of the formula in Lemma \ref{lem4} is by  definition equal to $\chi_{0}$ . This completes the proof. \end{proof}

\begin{lemma}\label{lem5} In the  above situation, and with the usual notation where 
$H_S$ is the pullback of the hyperplane class $H$ of $\BP^4$ under $\mu : S \to X \subset \BP^4$ and $d = (H_S^2)_{S} = {\rm deg}\, X$,
we have:
$$12 \chi({\mathcal O}_S) = 10d - d^2 + 2(K_S^2)_{S} + 5(H_S.K_S) + 2\delta\,\, .$$
 
\end{lemma}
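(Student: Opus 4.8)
The plan is to reduce everything to the smooth $4$-fold $V_0$, the blow-up of $\BP^4$ at the $\delta$ improper double points, where the proper transform $S_0$ is smooth and we can apply classical intersection theory directly. By Lemma \ref{lem4} we already know
$$12 \chi({\mathcal O}_S) = (c_2(V_0).S_0)_{V_0} + (c_1(V_0)|_{S_0}.K_{S_0})_{S_0} + 2(K_{S_0}^2)_{S_0} - (S_0.S_0)_{V_0}\,\, ,$$
so the entire task is to evaluate each of these four terms on $V_0$ in terms of the invariants $d$, $(K_S^2)_S$, $(H_S.K_S)$ and $\delta$. The key is to push all computations down to $\BP^4$ via the blow-down $f : V_0 \to \BP^4$, using Lemma \ref{lem0} (applied at each of the $\delta$ points $x_j$) to express $c_1(V_0)$ and $c_2(V_0)$ in terms of pullbacks from $\BP^4$ and the exceptional divisor classes.

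First I would set up the relevant classes on $V_0$. Writing $E_j = f^{-1}(x_j) \simeq \BP^3$ and $P_j \simeq \BP^2 \subset E_j$ for the exceptional data at each IDP, Lemma \ref{lem0} gives $c_1(V_0) = f^*c_1(\BP^4) - 3\sum_j E_j$ and $c_2(V_0) = f^*c_2(\BP^4) - 2\sum_j P_j$, together with $(P_j.P_j)_{V_0} = -1$. On $\BP^4$ we have $c(T_{\BP^4}) = (1+H)^5$, so $c_1(\BP^4) = 5H$ and $c_2(\BP^4) = 10H^2$. Next I would describe $S_0$ as a class on $V_0$: away from the exceptional divisors $S_0$ agrees with the proper transform of $X$, and at each $x_j$ the two smooth transverse branches resolve into the two disjoint $(-1)$-curves $l_{j1}, l_{j2}$, each meeting $E_j$ in a single point. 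The crucial book-keeping is that $f|_{S_0}$ is the simultaneous blow-up of $X$ at the $\delta$ nodes, so $S_0 = f^*[X] - \text{(correction supported on the } E_j)$; I would pin down this correction by intersecting with the $P_j$ and with the exceptional curves, using that each branch contributes a $(-1)$-curve.

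Then I would evaluate the four terms. For $(c_1(V_0)|_{S_0}.K_{S_0})_{S_0}$ and $(K_{S_0}^2)_{S_0}$ I would use that $K_{S_0}$ equals the pullback of $K_S$ adjusted by the exceptional $(-1)$-curves $l_{j1}, l_{j2}$, so that $(K_{S_0}^2)_{S_0} = (K_S^2)_S - 2\delta$ (each of the $2\delta$ exceptional curves lowers the self-intersection by one), and similarly $H_{S_0}$ restricts correctly to give the $(H_S.K_S)$ and $d$ contributions. The term $(S_0.S_0)_{V_0}$ is the self-intersection of $S_0$ inside $V_0$; pushing down to $\BP^4$ this is $d^2$ from $(f^*[X])^2$, minus corrections from the $P_j$ and the $(-1)$-curves, and this is where the $-d^2$ and part of the $2\delta$ in the target formula originate. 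Finally $(c_2(V_0).S_0)_{V_0}$ pushes to $(10H^2.[X])_{\BP^4} = 10d$, corrected by the $-2\sum_j (P_j.S_0)$ terms. Collecting all four evaluations should assemble into
$$12 \chi({\mathcal O}_S) = 10d - d^2 + 2(K_S^2)_{S} + 5(H_S.K_S) + 2\delta\,\, .$$

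The main obstacle I anticipate is the careful accounting of the IDP contributions: precisely how $S_0$ sits as a class in $A^2(V_0)$ near each $E_j$, and how the two $(-1)$-curves $l_{j1}, l_{j2}$ enter each of the four intersection numbers. In particular the self-intersection $(S_0.S_0)_{V_0}$ and the crossed term $(c_1(V_0)|_{S_0}.K_{S_0})_{S_0}$ both pick up corrections from the exceptional locus, and getting the coefficient of $\delta$ to come out as exactly $+2\delta$ overall requires that the separate $\delta$-contributions from the $c_2$-term, the self-intersection term, the $K^2$-term, and the $c_1 \cdot K$-term combine correctly rather than cancel. The $c_1(\BP^4)=5H$ input is what should yield the $5(H_S.K_S)$ term, but one must check that blowing up the nodes does not disturb this coefficient. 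Once the local IDP bookkeeping is settled, the rest is a routine substitution of $c_1(\BP^4)=5H$, $c_2(\BP^4)=10H^2$, $(H^4)=1$ and $d = \deg X$.
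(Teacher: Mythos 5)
Your plan follows the paper's own proof essentially verbatim: starting from Lemma~\ref{lem4}, using Lemma~\ref{lem0} to write $c_1(V_0)=f^*c_1(\BP^4)-3\sum_j F_j$, $c_2(V_0)=f^*c_2(\BP^4)-2\sum_j R_j$ and $S_0=f^*X-2\sum_j R_j$ in the Chow ring of $V_0$, and then evaluating the four terms by pushing down to $\BP^4$, where the separate $\delta$-contributions ($-4\delta$ from the $c_2$-term, $+6\delta$ from the $c_1\cdot K$-term via $(l_{ji}.K_{\tilde S})=-1$, $-4\delta$ from $2(K_{S_0}^2)$, and $+4\delta$ from $-(S_0.S_0)$) indeed combine to $+2\delta$. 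The one small imprecision is that $l_{j1},l_{j2}$ are \emph{contained} in the exceptional $\BP^3$ as disjoint lines (each meeting a plane $P_j$ in one point, which is what forces the coefficient $-2$ in the class of $S_0$), rather than meeting $E_j$ in a single point; this is exactly the bookkeeping you flag and the paper carries out.
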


\begin{proof} We shall prove the formula by computing each term of the right hand side of the equation in Lemma \ref{lem4}. 

Let $F_j \simeq \BP^3$ be the exceptional divisor of $f : V_0 \to \BP^4$ over $x_j$ and let $R_j \simeq \BP^2$ be a plane in $F_j$. 

Then  $c_{*}(\BP^4) = (1+H)^5$ in the Chow ring of $\BP^4$, and
$$c_1(V_0) = f^*c_1(\BP^4) - 3 \sum_{j=1}^{\delta} F_j\,\, ,$$
$$c_2(V_0) = f^*c_2(\BP^4) - 2 \sum_{j=1}^{\delta} R_j\,\, ,\,\, (R_j.R_j)_{V_0} = -1$$
by Lemma \ref{lem0}, and 
$$S_0 = f^*X - 2 \sum_{j=1}^{\delta} R_j = f^*(dH^2) - 2 \sum_{j=1}^{\delta} R_j$$
in the Chow ring of $V_0$.  Hence 
$$(c_2(V_0).S_0)_{V_0} = ((10f^*H^2 - 2\sum_{j=1}^{\delta} R_j).(df^*H^2 - 2\sum_{j=1}^{\delta} R_j)_{V_0} = 10d - 4\delta\,\, ,$$ 
$$(S_0.S_0) = ((df^*(H^2) - 2\sum_{j=1}^{\delta} R_j)^2) = d^2 - 4 \delta\,\, .$$
As $g : \tilde{S} \to S_0$ is crepant and $\tilde{S} \to S$ contracts exactly $2\delta$ disjoint $(-1)$-curves, we obtain 
$$(K_{S_0}^2)_{S_0} = (K_{\tilde{S}}^2)_{\tilde{S}} = (K_S^2)_{S} - 2\delta\,\, .$$

Finally we compute $(c_1(V_0)|_{S_0}.K_{S_0})_{S_0}$. First of all, we have
$$(c_1(V_0)|_{S_0}.K_{S_0})_{S_0} = ((f \circ g)^*c_1(\BP^4)|_{\tilde{S}}.K_{\tilde{S}})_{\tilde{S}}$$ 
$$= (5H_{\tilde{S}} - 3\sum_{j=1}^{\delta}F_j|_{\tilde{S}}).K_{\tilde{S}})_{\tilde{S}} 
= (5H_{\tilde{S}}.K_{\tilde{S}})_{\tilde{S}} - 3\sum_{j=1}^{\delta} (l_{j1} + l_{j2}).K_{\tilde{S}})_{\tilde{S}} \,\, .$$
Here $F_j$ is the proper transform of the exceptional divisor $f^{-1}(x_j)$ of $f$ and we used the fact that $g|_{\tilde{S}} : \tilde{S} \to S_0$ is crepant for the second equality so there appears no exceptional divisor of $g_i$ in the formula. As $l_{j1}$ and $l_{j2}$ are $(-1)$-curves on a smooth surface $\tilde{S}$, we have 
 $(l_{j1}.K_{\tilde{S}})_{\tilde{S}} = (l_{j2}.K_{\tilde{S}})_{\tilde{S}} = -1$. Moreover, since the morphism $\nu : \tilde{S} \to S$ defined above is the contraction of exactly $2\delta$ $(-1)$-curves $l_{j1}$ and $l_{j2}$, it follows that
$$(H_{\tilde{S}}.K_{\tilde{S}})_{\tilde{S}} = (\nu^{*}H_{S}.(\nu^{*}K_S + \sum_{j=1}^{\delta} (l_{j1} + l_{j2})))_{\tilde{S}} = (H_S.K_S)_{S}\,\, .$$ 
Substituting these two formulae into the last formula for $(c_1(V_0)|_{S_0}.K_{S_0})$, we obtain that  $(c_1(V_0)|_{S_0}.K_{S_0}) = 5(H_S.K_S)$. 
Substituting the four formulae that we obtained  for $(c_2(V_0).S_0)_{V_0}$, $(c_1(V_0)|_{S_0}.K_{S_0})_{S_0}$, $(K_{S_0}^2)_{S_0}$ and $(S_0.S_0)_{V_0}$ inside 
the formula in Lemma \ref{lem4}, we obtain 
$$12\chi({\mathcal O}_S) = 10d - 4\delta + 5(H_S.K_S) + 6\delta + 2(K_S^2) - 4\delta -d^2 + 4\delta\,\, .$$
Simplifying the right hand side, we obtain
$$12 \chi({\mathcal O}_S) = 10d - d^2 + 2(K_S^2)_{S} + 5(H_S.K_S)_S + 2\delta\,\, ,$$
as claimed.  
\end{proof}

This completes the proof of Theorem \ref{thm1}. \end{proof}

\section{Double point formulae via symplectic approximations.}

We introduce, inspired by  a concept introduced by Kodaira in \cite{kod}, a class of isolated singularities.

\begin{defin}
An n-dimensional  isolated singularity $O \in X \subset \CC^{2n}$ is said to be {\bf quasi-improper-multiple point = QIMP},
if $X$ consists of $r$ smooth branches $X_1, \dots, X_r$ passing through the origin $O$.
\end{defin}

To explain the notion, one can take a good projection yielding a splitting $\CC^{2n} = \CC^n \oplus \CC^n$,
so that 
$$ X_i = \{ (x,y) \in  \CC^n \oplus \CC^n | y = F_i (x) \}.$$

Clearly the intersection points $X_i \cap X_j$ correspond to the set
$$ \Ga_{ij} : = \{ x \in \CC^n | F_i (x) - F_j(x) = 0 \}.$$

The hypothesis of isolated singularities amounts to the requirement that $0 \in \CC^n$ is isolated in the locus
$ \Ga_{ij}$, and, setting $\hol : =  \hol_{\CC^n, 0}$, we consider the intersection multiplicity  
$$  m_{ij} : = dim_{\CC}( \hol / (F_i - F_j)\hol^n),$$
 where $F_i -F_j$ is considered as a $1 \times n$ matrix.
It is clear that, for a generic perturbation of  the branches $X_i$, the isolated singularity deforms to 
$$\de_O : = \sum_{i<j} m_{ij} $$
IDP's. This is why we shall say that $\de_O$ is the {\bf local number of IDP's}.

\begin{theo}\label{symplgen}
Let $X \subset \PP^{2n}$ be a  complex variety with isolated singularities, of which 

(1) $h$ are quasi-improper-multiple points,
such that the sum of the local numbers of IDP's equals  $\de$,

(2)  the other  singular points are normal  and locally smoothable.  

Then $X$ admits a global smoothing to a
symplectic immersed manifold $M \subset \PP^{2n}$, with exactly  $\de$   IDP's,
and we have, if $f : M' \ra M$ is the immersion,  that
$$ d^2 = 2 \de + e(N_f) ,$$
where $e$ is the Euler class of the oriented normal bundle to the map.
\end{theo}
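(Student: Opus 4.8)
The plan is to construct the global symplectic smoothing by working locally at each singular point and then patching, after which the double point formula follows from the topological self-intersection argument already sketched in Section 2. First I would treat the two types of singular points separately. At each quasi-improper-multiple point, the local model is a union of $r$ smooth branches $X_1, \dots, X_r$ with $X_i = \{y = F_i(x)\}$; here I would perturb the defining maps $F_i$ to $F_i + \epsilon G_i$ for generic $G_i$, so that the pairwise intersection loci $\Ga_{ij}$ become transverse. By the very definition of $m_{ij}$ as the intersection multiplicity $\dim_\CC(\hol/(F_i - F_j)\hol^n)$, such a generic perturbation replaces the origin with exactly $\sum_{i<j} m_{ij} = \de_O$ transverse double points, i.e. IDP's, and the perturbed branches remain symplectic submanifolds (being $C^1$-close to complex ones, hence still symplectic with respect to the Fubini--Study form). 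At each normal locally smoothable point I would invoke the hypothesized local smoothing: a smoothing replaces a neighborhood of the singular point by a smooth complex manifold, which is automatically symplectic. The main point here is that neither type of local modification introduces any new IDP's beyond the $\de$ we are counting.

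Next I would patch these local modifications into a global object. The key tool is that the Fubini--Study symplectic form $\omega$ on $\PP^{2n}$ restricts to a symplectic form on the smooth locus $X^* = X \setminus \Sing(X)$, and that ``being symplectic'' is an open $C^1$-condition. Using a partition of unity subordinate to a cover by the local charts, I would glue the smooth locus of $X$ to the locally smoothed/perturbed pieces. The subtlety is that gluing symplectic forms is not automatic; I would instead glue the underlying submanifolds and then check that the FS form pulls back to a symplectic (nondegenerate) form on the resulting $M'$, using that each local piece is $C^1$-close to the original $X$ away from a small neighborhood of the singular points. This yields a compact smooth manifold $M'$ together with an immersion $f : M' \ra M \subset \PP^{2n}$ whose only non-immersive behavior is at the $\de$ transverse double points. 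Since the branches are everywhere symplectic and the normal smoothings are complex (hence symplectic), the immersion $f$ is a symplectic immersion, and its normal bundle $N_f$ carries a canonical orientation coming from the symplectic (almost complex) structure.

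Finally I would extract the formula. As in the derivation of the Modern Double Point Formula in Section 2, the self-intersection of $X$ in $\PP^{2n}$ equals $d^2$ because $X$ has degree $d$ and $[\omega]$ is the hyperplane class. On the other hand, this self-intersection decomposes as the contribution $2\de$ from the transverse double points of $M$ plus the self-intersection of the zero section inside the normal bundle $N_f$, which by the standard Thom--Pontryagin argument equals the Euler number $e(N_f)$ of the oriented normal bundle. The key input making this rigorous is that for a symplectic (as opposed to holomorphic) immersion, the Euler class of the oriented normal bundle still computes the algebraic count of transverse self-intersections of a small generic perturbation of the zero section, pushed via the symplectic exponential map into a tubular neighborhood of $M$; the orientation coming from the almost complex structure guarantees that each IDP contributes $+2$ to the intersection count, exactly as in the complex case. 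Combining these two evaluations of $X^2 = d^2$ gives $d^2 = 2\de + e(N_f)$, as claimed.

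The hard part, I expect, will be the patching step: ensuring that the locally perturbed symplectic branches and the local complex smoothings can be glued to the unmodified symplectic locus $X^*$ so that the result is a genuine smooth symplectic immersed manifold, with the global symplectic form controlled well enough that the oriented Euler-class computation is valid. In particular one must verify that the normal bundle $N_f$ is globally well-defined and oriented, and that the symplectic exponential map identifies a neighborhood of the zero section with a neighborhood of $M$ compatibly across the patches. This is where the symplectic (rather than merely topological) structure does real work, since it supplies both the canonical orientation and the deformation-invariance of the intersection count.
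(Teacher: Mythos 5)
Your overall strategy coincides with the paper's: modify $X$ locally at each singular point, obtain a symplectic immersed manifold $M$ with exactly $\de$ transverse double points, and then read off $d^2 = 2\de + e(N_f)$ from the decomposition of the topological self-intersection, exactly as sketched in the introduction and in Section 2. Your treatment of the quasi-improper-multiple points (cut off a generic perturbation of each branch $F_i$, observe that $C^1$-closeness keeps the branches symplectic for the Fubini--Study form, and that the multiplicity $m_{ij}$ counts the resulting transverse intersection points) is essentially what the paper means by ``a local deformation at the non normal singularities'', and your final Euler-class computation is the same as the paper's.

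The genuine gap is exactly where you suspect it: the patching at the normal, locally smoothable points. A local smoothing of a normal isolated singularity is in general an abstract flat family $\sX \ra \De$ whose nearby fibre (the Milnor fibre) does not come equipped with any embedding, or even immersion, into the ambient $\PP^{2n}$; so there are no ``underlying submanifolds'' to glue with a partition of unity, and one cannot ``check that the Fubini--Study form pulls back nondegenerately'' on a piece that has not yet been mapped anywhere. (Only for singularities cut out by $n$ equations in a local chart $\CC^{2n}$ would perturbing the equations give an embedded smoothing, and the theorem is not restricted to that case.) What is actually required is a symplectic surgery: excise a cone neighbourhood of the singular point, glue in the Milnor fibre of the local smoothing along the contact link, and show that the resulting manifold carries a symplectic structure and a map to $\PP^{2n}$ in the homology class of $X$, well-defined enough for the oriented normal bundle and its Euler class to make sense. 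The paper does not redo this work: it invokes Theorem 1.2 of \cite{Ca09}, which is precisely the statement that such local smoothings assemble into a global symplectic smoothing. Without that input (or an equivalent gluing theorem) your construction of $M$ is incomplete; with it, the remainder of your argument goes through and agrees with the paper's.
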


\begin{proof}

The result follows from  Theorem 1.2 of \cite{Ca09}, and a local deformation at the 
non normal singularities, showing that we can deform $X$ to 
a symplectic immersed manifold $M \subset \PP^{2n}$, with exactly  $\de$   IDP's.

Letting $f : M' \ra M$ be the immersion, we calculate the self intersection $d^2$ of $M$ in $\PP^{2n}$
(in the same way as explained in the introduction)
as $ 2 \de$ plus the self intersection number of the zero section in the normal bundle $N_f$ (which maps via the exponential map
onto a neighbourhood of $X$). \end{proof} 

\begin{remark}
A special case is the one where the normal singularities are isolated hypersurface singularities.

The following was the original idea which led us to realize that the Severi double point formula
 for smooth surfaces (i.e., smooth outside of the IDP's) holds verbatim if we also allow rational double points,  at least over $\mathbb C$.

In fact, for surface rational double points  over $\mathbb C$, one has the fortunate coincidence that $M'$ coincides with the minimal resolution
$\pi : S \ra X$, and then one has only to observe   that $\pi$ and $f$ are a differentiable deformation of each other,
which can be taken as the identity outside the inverse image $S_B$ of  a sufficiently small neighbourhood $B$ of the 
normal singularities.
 
In this  case, a possible way to show the equality between $e(N_f)$ and $c_2 (N_{\pi})$,   following the arguments of \cite{Ca09},  could 
be to argue  that  the Euler number of $N_f$ is 
obtained by integrating the top  Chern form   of   $N_{\pi}$ outside  $S_B$,
plus adding an integral on the Milnor fibre $M'_B$,  diffeomorphic to the neighbourhood $S_B$ of the exceptional divisor:
this integral should yield the top Chern class integral on the tangent bundle of $S_B$, and  then  the desired equality would follow.

\end{remark}

{\bf Acknowledgements}:  The authors  would like to thank Alex Dimca for useful discussions, and Paolo Aluffi for 
answering the first author's email queries.

\end{document}